\newtheorem{theorem}{Theorem}[section]
\theoremstyle{definition}
\newtheorem{definition}[theorem]{Definition}
\newtheorem{example}[theorem]{Example}
\newtheorem{corollary}[theorem]{Corollary}
\theoremstyle{remark}
\numberwithin{equation}{section}
\begin{document}

\title{ Interval valued $(\alpha,\beta)$-fuzzy hyperideals in  Krasner $(m,n)$-hyperrings
  }

\author{M. Anbarloei}
\address{Department of Mathematics, Faculty of Sciences,
Imam Khomeini International University, Qazvin, Iran.
}

\email{m.anbarloei@sci.ikiu.ac.ir }


\subjclass[2010]{20N20, 03E72, 03B52}


\keywords{  Interval valued $(\alpha,\beta)$-fuzzy hyperideal, Interval valued $(\in ,\in \vee q)$-fuzzy hyperideal, Krasner $(m,n)$-hyperring.}

\begin{abstract}
In this paper, the notion of quasicoincidence of a fuzzy interval valued with an interval valued fuzzy set, which generalizes the concept of quasicoincidence of a fuzzy point in a fuzzy set is concentrated. Based on the idea, 
we  study the concept of  interval valued $(\alpha, \beta)$-fuzzy hyperideals  in Krasner $(m,n)$-hyperrings.  In particular, some
fundamental aspects of interval valued   $(\in, \in \vee q)$-fuzzy hyperideals will be considered. Moreover,  we examine  the notion of implication-based interval valued fuzzy hyperideals in a Krasner $(m,n)$-hyperring.

\end{abstract}
\maketitle
\section{Introduction}
In this section, we describe the motivation and a survey of
related works.  The concept of  fuzzy sets was proposed  by Zadeh in 1965 \cite{zadeh}.  Fuzzy set theory is a useful tool to describe situations in which the data are imprecise or vague. Fuzzy sets handle such situations by attributing a degree to which a certain object belongs to a set. After the pioneering work of Zadeh, there are many papers devoted to fuzzify the classical mathematics into fuzzy mathematics.  
 Rosenfeld introduced and studied fuzzy sets in the context of group theory and formulated the notion of a fuzzy subgroup of a group \cite{Rosenfeld}. The concepts of fuzzy subrings and ideals were considered by in Liu \cite{liu}.  
 
 After the introduction of the notion of hypergroups as a generalization of groups by  Marty \cite{s1} in 1934, many papers and books concerning hyperstructure theory have appeared in literature. A short review of this theory appears in  \cite {s2, s3, davvaz1, davvaz2, s4}. The notion of n-group is another generalization of groups.  The idea of investigations of $n$-ary algebras goes back to Kasner’s lecture \cite{s5} at the 53rd annual meeting of the American Association of the Advancement of Science in 1904.    Dorente wrote the first paper concerning the theory of $n$-ary groups  about 90 years ago \cite{s6} . Later on, Crombez and
Timm \cite{s7, s8} defined  the notion of the $(m, n)$-rings and their quotient structures.  
 n-ary generalization of algebraic structures is the most
natural way for further development and deeper understanding
of their fundamental properties  \cite{l1, l2, l3, ma, rev1, s9,cons}.
One important class of hyperrings  is called the Krasner hyperring \cite{kras}.  In \cite{d1},  a generalization of the Krasner hyperrings, which is a subclass of $(m,n)$-hyperrings, was defined by Mirvakili and Davvaz. It is called Krasner $(m,n)$-hyperring.  Ameri and Norouzi in \cite{sorc1} introduced some important
classes of hyperideals in this respect. Many other interesting papers have been written  on Krasner $(m,n)$-hyperring  \cite{mah2, mah3, asadi, rev2,  d1, nour,   rev1, Yassine}.

Many researchers are interested in fuzzy hyperstructures because
of nice connection between fuzzy sets and hyperstructures  \cite{Zahedi1, Zahedi2, Leoreanu1, Corsini, zhan2, Yin, asli}.  Zhan and et. al. in \cite{zhan} concentrated on the quasicoincidence of a fuzzy interval value with an interval valued fuzzy set. Davvaz in \cite{davvaz2} introduced the notion of a fuzzy hyperideal of a Krasner $(m,n)$-hyperring
and to extended the fuzzy results to Krasner $(m,n)$-hyperring.
 Interval valued fuzzy sets were introduced independently by Zadeh \cite{zadeh2}. Zhan and et. al.\cite{zhan} concentrated on the quasi-coincidence of a fuzzy interval value with an interval valued fuzzy set. The notion of
quasicoincidence of a fuzzy interval valued with an interval valued
fuzzy set, which generalizes the concept of
quasicoincidence of a fuzzy point in a fuzzy set, was introduced in \cite{davvaz3}. The properties of the interval valued $(\alpha, \beta)$-fuzzy hyperideals in a semihyperring were studied in \cite{Ahmed}. This concept was generalized by Li and et. al. in \cite{Li}. The notion of interval valued $(\alpha,\beta)$-fuzzy subalgebraic hypersystems in an algebraic hypersystem, which is a generalization of a
fuzzy subalgebraic system, was defined in \cite{zhan3}. Dutta in  \cite{Dutta} established the concept of Interval valued fuzzy prime and semiprime ideals of a hypersemiring.

In this paper, our aim is to consider the concept of
quasicoincidence of a fuzzy interval valued with an interval valued
fuzzy set, which generalizes the concept of quasicoincidence of a fuzzy point in a fuzzy set. This paper is organized as follows. In section 2, we recall some terms and definitions which we need to develop our paper.
In Section 3, we  analyze  entropy of interval valued $(\alpha, \beta)$-fuzzy hyperideals  in Krasner $(m,n)$-hyperrings. In Section 4, some
fundamental aspects of interval valued   $(\in, \in \vee q)$-fuzzy hyperideals have been investigated. Finally, in Section 5, we discuss  the notion of implication-based interval valued fuzzy hyperideals of Krasner $(m,n)$-hyperrings.

\section{Preliminaries}
In this section , we recall some basic notions and results of fuzzy algebra and  Krasner $(m,n)$-hyperrings
which we shall use in this paper.\\
 A fuzzy subset of $G$ is a function  $\mu : G \longrightarrow L$ such that $L$ is the unit interval  $[0,1] \subseteq \mathbb{R}$. The set of all fuzzy subsets of $G$ is denoted by $L^G$ .  The set, $\{a \in G  \ \vert \ \mu(a) \neq 0
 \}$ is called the support of $\mu$ and is denoted by $supp(\mu)$. 

\begin{definition}
\cite{davvaz2} A Fuzzy subset $\mathcal{A}$ of a Krasner $(m,n)$-hyperring $\mathcal{R}$ is said to be a fuzzy hyperideal of $\mathcal{R}$ if the following conditions hold:

(i) $min \{\mathcal{A}(a_1),\dots,\mathcal{A}(a_m)\} \leq inf\{f(c) \vert \ c \in f(a_1^m)\}$ for all $a_1^m \in \mathcal{R}$;

(ii) $\mathcal{A}(a) \leq \mathcal{A}(-a)$, for all $a \in \mathcal{R}$;

(iii) $max\{\mathcal{A}(a_1),\dots,\mathcal{A}(a_n)\} \leq \mathcal{A}(g(a_1^n))$, for all $a_1^n \in \mathcal{R}$.

\end{definition}
It was shown (Theorem 5.6 in \cite{davvaz2}) that a fuzzy subset $\mathcal{A}$ of a Krasner $(m,n)$-hyperring $\mathcal{R}$ is a fuzzy hyperideal if and only if every its non-empty level subset
is a hyperideal of $\mathcal{R}$.

\begin{definition} \cite{zadeh2}
An interval number on $[0, 1]$, denoted by $\tilde{x}$, is deﬁned as the closed subinterval of $[0, 1]$, where $\tilde{x}=[x^-,x^+]$ satisfying $0 \leq x^- \leq x^+ \leq 1$.
\end{definition}
$D[0,1]$ denotes the set of all interval numbers. The interval $[x,x]$ can be simply identified by the number $x$. 
Let $\tilde{x}_i=[x_i^-,x_i^+], \tilde{y}_i=[y_i^-,y_i^+] \in D[0,1]$ for $i \in I$. Then we define:

$rmin\{\tilde{x}_i,\tilde{y}_i\}=[min\{x_i^-,y_i^-\},min\{x_i^+,y_i^+\}],$

$rmax\{\tilde{x}_i,\tilde{y}_i\}=[max\{x_i^-,y_i^-\},max\{x_i^+,y_i^+\}],$

$rsup\ \tilde{x}_i=[\bigvee_{i \in I}x_i^-, \bigvee_{i \in I}x_i^+],$

$rinf\ \tilde{x}_i=[\bigwedge_{i \in I}x_i^-, \bigwedge_{i \in I}x_i^+],$\\
and put

$(1) \ \tilde{x}_1 \leq \tilde{x}_2 \Longleftrightarrow x_1^- \leq x_2^- \ \text{and} \  x_1^+ \leq x_2^+$,

$(2) \  \tilde{x}_1 = \tilde{x}_2 \Longleftrightarrow x_1^- = x_2^- \ \text{and} \  x_1^+ = x_2^+$,

$(3) \  \tilde{x}_1 < \tilde{x}_2 \Longleftrightarrow \tilde{x}_1 \leq \tilde{x}_2 \ \text{and} \ \tilde{x}_1 \neq \tilde{x}_2$,

$(4) \ k\tilde{x}=[kx^-,kx^+]$ for $0 \leq k \leq 1$.\\
Clearly, $(D[0,1],\leq , \wedge,\vee)$ forms a complete
 lattice with the least element $0=[0,0]$ and the greatest element $1=[1,1]$.\\
Recall from \cite{zadeh2} that an interval valued fuzzy subset $\mathcal{A}$ on $X$ is the
set 
\[\mathcal{A}=\{(x,[\tilde{\mu}^-_{\mathcal{A}}(x), \tilde{\mu}^+_{\mathcal{A}}(x)]) \vert x \in X\},\]
such that $\tilde{\mu}^-_{\mathcal{A}}$ and $\tilde{\mu}^+_{\mathcal{A}}$ are two fuzzy subsets of $X$ with $\tilde{\mu}^-_{\mathcal{A}}(x) \leq \tilde{\mu}^+_{\mathcal{A}}(x)$
for all $x \in X$. We put $\tilde{\mu}_{\mathcal{A}}(x)=[\tilde{\mu}^-_{\mathcal{A}}(x), \tilde{\mu}^+_{\mathcal{A}}(x)]$. Then we have $\mathcal{A}=\{(x,\tilde{\mu}_{\mathcal{A}}(x)) \vert x \in X\}$ such that $\tilde{\mu}_{\mathcal{A}}: X \longrightarrow D[0,1]$.

An interval valued fuzzy set $\mathcal{A}$ of a Krasner $(m,n)$-hyperring $\mathcal{R}$ of the form
\[
 \tilde{\mu}_\mathcal{A}(y)=\left\{
 \begin{array}{lr}
 \tilde{s}(\neq[0,0]) &\text{if $y=x$,}\\
\lbrack 0,0 \rbrack &\text{ otherwise.}
 \end{array} \right.\]
 is called a fuzzy interval value with support $x$ and interval value $\tilde{s}$ and is denoted by $F(x;
\tilde{s})$. A fuzzy interval value $F(x;
\tilde{s})$ is said to belong to  (resp. be quasi-coincident with) an interval valued fuzzy set $\mathcal{A}$, written as $F(x;
\tilde{s}) \in \mathcal{A}$ (resp. $F(x;
\tilde{s}) q \mathcal{A}$) if $\tilde{\mu}(_\mathcal{A}(x) \geq  \tilde{s}$ ( resp. $\tilde{\mu}_\mathcal{A}(x) +\tilde{s} > [1,1]$). We write $F(x;
\tilde{s}) \in \vee q $ (resp. $F(x;
\tilde{s}) \in \wedge q $) $\mathcal{A}$ if $F(x;
\tilde{s}) \in \mathcal{A}$ or (resp. and) $F(x;
\tilde{s}) q \mathcal{A}$. If $\in \vee q$ does not hold, then we write $\overline{\in \vee q}$.

Suppose that  $G$ is a nonempty set. $P^*(G)$ denotes  the 
set of all the nonempty subsets of $G$. The map $f : G^n \longrightarrow P^*(G)$
is called an $n$-ary hyperoperation and the algebraic system $(G, f)$ is called an $n$-ary hypergroupoid. For non-empty subsets $G_1,..., G_n$ of $G$ we define
$f(G^n_1) = f(G_1,..., G_n) = \bigcup \{f(a^n_1) \ \vert \ a_i \in G_i, i = 1,..., n \}$.
The sequence $a_i, a_{i+1},..., a_j$ 
will be denoted by $a^j_i$. For $j< i$, $a^j_i$ is the empty symbol. Using this notation,
$f(a_1,..., a_i, b_{i+1},..., b_j, c_{j+1},..., c_n)$
will be written as $f(a^i_1, b^j_{i+1}, c^n_{j+1})$. The expression will be written in the form $f(a^i_1, b^{(j-i)}, c^n_{j+1})$, when $b_{i+1} =... = b_j = b$ . 
If for every $1 \leq i < j \leq n$ and all $a_1, a_2,..., a_{2n-1} \in G$, 

$f(a^{i-1}_1, f(a_i^{n+i-1}), a^{2n-1}_{n+i}) = f(a^{j-1}_1, f(a_j^{n+j-1}), a_{n+j}^{2n-1}),$ \\
then the n-ary hyperoperation $f$ is called associative. An $n$-ary hypergroupoid with the
associative $n$-ary hyperoperation is called an $n$-ary semihypergroup. 

An $n$-ary hypergroupoid $(G, f)$ in which the equation $y \in f(x_1^{i-1}, a_i, x_{ i+1}^n)$ has a solution $a_i \in G$
for every $x_1^{i-1}, x_{ i+1}^n,y \in G$ and $1 \leq i \leq n$, is called an $n$-ary quasihypergroup, when $(G, f)$ is an $n$-ary
semihypergroup, $(G, f)$ is called an $n$-ary hypergroup. 

An $n$-ary hypergroupoid $(G, f)$ is commutative if for all $ \sigma \in \mathbb{S}_n$, the group of all permutations of $\{1, 2, 3,..., n\}$, and for every $x_1^n \in G$ we have $f(x_1,..., x_n) = f(x_{\sigma(1)},..., x_{\sigma(n)})$.
If $x_1^n \in G$ then we denote $x_{\sigma(1)}^{\sigma(n)}$ as the $(x_{\sigma(1)},..., x_{\sigma(n)})$.

If $f$ is an $n$-ary hyperoperation and $t = l(n- 1) + 1$, then $t$-ary hyperoperation $f_{(l)}$ is given by
$f_{(l)}(x_1^{l(n-1)+1}) = f(f(..., f(f(x^n _1), x_{n+1}^{2n -1}),...), x_{(l-1)(n-1)+1}^{l(n-1)+1})$. 
\begin{definition}
\cite{d1} Let $(G, f)$ be an $n$-ary hypergroup and $H$ be a non-empty subset of $G$. $H$ is 
an $n$-ary subhypergroup of $(G, f)$, if $f(a^n _1) \subseteq H$ for $a^n_ 1 \in H$, and the equation $b \in f(b^{i-1}_1, x_i, b^n _{i+1})$ has a solution $x_i \in H$ for every $b^{i-1}_1, b^n _{i+1}, b \in H$ and $1 \leq i \leq n$.
An element $e \in G$ is said to be a scalar neutral element if $a = f(e^{(i-1)}, a, e^{(n-i)})$, for every $1 \leq i \leq n$ and
for every $a \in G$. 

An element $0$ of an $n$-ary semihypergroup $(G, g)$ is  a zero element if for each $a^n_2 \in G$ we have
$g(0, a^n _2) = g(a_2, 0, a^n_ 3) = ... = g(a^n _2, 0) = 0$.
If $0$ and $0^ \prime $ are two zero elements, then $0 = g(0^ \prime , 0^{(n-1)}) = 0 ^ \prime$ and so the zero element is unique. 
\end{definition}
\begin{definition}
\cite{l1} Let $(G, f)$ be an $n$-ary hypergroup. $(G, f)$ is called a canonical $n$-ary
hypergroup if\\
(1) there exists a unique $e \in G$, such that for every $a \in G, f(a, e^{(n-1)}) = a$;\\
(2) for all $a \in G$ there exists a unique $a^{-1} \in G$, such that $e \in f(a, a^{-1}, e^{(n-2)})$;\\
(3) if $a \in f(a^n _1)$, then for all $i$, we have $a_i \in f(a, a^{-1},..., a^{-1}_{ i-1}, a^{-1}_ {i+1},..., a^{-1}_ n)$.

We say that $e$ is the scalar identity of $(G, f)$ and $a^{-1}$ is the inverse of $a$. Notice that the inverse of $e$ is $e$.
\end{definition}
\begin{definition} \cite{d1}
A Krasner $(m, n)$-hyperring is an algebraic hyperstructure $(R, f, g)$, or simply $R$,  which satisfies the following axioms:

(1) $(R, f$) is a canonical $m$-ary hypergroup;

(2) $(R, g)$ is a $n$-ary semigroup;

(3) the $n$-ary operation $g$ is distributive with respect to the $m$-ary hyperoperation $f$ , i.e., for every $x^{i-1}_1 , x^n_{ i+1}, a^m_ 1 \in R$, and $1 \leq i \leq n$,
$g(x^{i-1}_1, f(a^m _1 ), x^n _{i+1}) = f(g(x^{i-1}_1, a_1, x^n_{ i+1}),..., g(x^{i-1}_1, a_m, x^n_{ i+1}))$;

(4) $0$ is a zero element (absorbing element) of the $n$-ary operation $g$, i.e., for every $a^n_ 2 \in R$ we have 
$g(0, a^n _2) = g(a_2, 0, a^n _3) = ... = g(a^n_ 2, 0) = 0$.
\end{definition}
A non-empty subset $S$ of $R$ is called a subhyperring of $R$ if $(S, f, g)$ is a Krasner $(m, n)$-hyperring. Let 
$I$ be a non-empty subset of $R$, we say that $I$ is a hyperideal of $(R, f, g)$ if $(I, f)$ is an $m$-ary subhypergroup
of $(R, f)$ and $g(a^{i-1}_1, I, a_{i+1}^n) \subseteq I$, for every $a^n _1 \in R$ and $1 \leq i \leq n$.
\section{Interval valued $(\alpha,\beta)$-fuzzy hyperideals}
In this section, we introduce the notion of n-ary interval valued $(\alpha, \beta)$-fuzzy hyperideal in a Krasner $(m,n)$-hyperring $\mathcal{R}$ where $\alpha, \beta \in \{\in, q, \in \vee q, \in \wedge q\}$. 
\begin{definition} \label{1}
An interval valued fuzzy set $\mathcal{A}$ of a Krasner $(m,n)$-hyperring $\mathcal{R}$ is said to be n-ary interval valued $(\alpha, \beta)$-fuzzy hyperideal of $\mathcal{R}$ where $\alpha, \beta \in \{\in, q, \in \vee q, \in \wedge q\}$ if for all $s_1^m, t_1^n, s \in (0,1]$ and $a_1^m, b_1^n,b \in \mathcal{R}$, the following conditions hold:
\begin{itemize} 
\item[\rm{(1)}] $F(a_1;
\tilde{s}_1)\alpha\mathcal{A},\dots, F(a_m;
\tilde{s}_m)\alpha\mathcal{A}$ impliy that $F(a;
rmin\{\tilde{s}_1,\dots,\tilde{s}_m\})\beta\mathcal{A}$, for all $a \in f(a_1^m)$,
\item[\rm{(2)}] $F(b;\tilde{s}) \alpha \mathcal{A}$ implies that $F(-b;\tilde{s}) \beta \mathcal{A}$,
\item[\rm{(3)}] $F(b_1;
\tilde{s}_1)\alpha\mathcal{A},\dots, F(b_n;
\tilde{s}_m)\alpha\mathcal{A}$ imply that $F(g(b_1^n);
\tilde{s})\beta\mathcal{A}$,
\end{itemize} 
\end{definition}
Notice that $\alpha=\in \wedge q$ in Definition \ref{1} should not be considered. Let $\tilde{\mu}_{\mathcal{A}}(a) \leq [0.5,0.5]$ for an interval valued fuzzy set $\mathcal{A}$ of $\mathcal{R}$ and for all $a \in \mathcal{R}$. Assume that $F(a;\tilde{s})\in \wedge q \mathcal{A}$ for $a \in \mathcal{R}$ and $s \in (0,1]$. This means $\tilde{\mu}_{\mathcal{A}}(a) \geq \tilde{s}$ and $\tilde{\mu}_{\mathcal{A}}(a) + \tilde{s} > [1,1]$. Then we have
\[[1,1] < \tilde{\mu}_{\mathcal{A}}(a) + \tilde{s} \leq \tilde{\mu}_{\mathcal{A}}(a) + \tilde{\mu}_{\mathcal{A}}(a)=2\tilde{\mu}_{\mathcal{A}}(a)\]
and so $\tilde{\mu}_{\mathcal{A}}(a) > [0.5,0.5]$. This follows that $\{F(a; \tilde{s}) \ \vert \ F(a; \tilde{s}) \in \wedge q \mathcal{A}\}=\varnothing$.
\begin{theorem} \label{2}
Let $\mathcal{A}$ be an n-ary interval valued $(\in, \in )$-fuzzy hyperideal of  a Krasner $(m,n)$-hyperring $\mathcal{R}$. Then $\mathcal{A}$ is an n-ary interval valued $(\in , \in \vee q)$-fuzzy hyperideal of $\mathcal{R}$.
\end{theorem}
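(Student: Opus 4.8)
The plan is to observe that passing from an $(\in,\in)$-fuzzy hyperideal to an $(\in,\in\vee q)$-fuzzy hyperideal merely weakens the conclusion of each of the three defining implications while leaving their hypotheses untouched, so nothing substantive has to be reproved. Concretely, in Definition \ref{1} the parameter $\alpha$ is the same ($\alpha=\in$) for both notions; only $\beta$ changes from $\in$ to $\in\vee q$. Since by the definition of the relation $\in\vee q$ (the disjunction of $\in$ and $q$) we have that $F(x;\tilde{s})\in\mathcal{A}$ immediately yields $F(x;\tilde{s})\in\vee q\,\mathcal{A}$, any implication whose conclusion reads ``$\in\mathcal{A}$'' automatically holds with the weaker conclusion ``$\in\vee q\,\mathcal{A}$''. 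The whole proof is therefore the application, three times, of the inclusion of relations $\in\ \subseteq\ \in\vee q$.

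First I would treat condition (1): assuming $F(a_i;\tilde{s}_i)\in\mathcal{A}$ for $i=1,\dots,m$, the hypothesis that $\mathcal{A}$ is an $(\in,\in)$-fuzzy hyperideal gives $F(a;rmin\{\tilde{s}_1,\dots,\tilde{s}_m\})\in\mathcal{A}$ for every $a\in f(a_1^m)$, and this at once delivers $F(a;rmin\{\tilde{s}_1,\dots,\tilde{s}_m\})\in\vee q\,\mathcal{A}$. Next I would handle condition (2): from $F(b;\tilde{s})\in\mathcal{A}$ the $(\in,\in)$-property yields $F(-b;\tilde{s})\in\mathcal{A}$, hence $F(-b;\tilde{s})\in\vee q\,\mathcal{A}$. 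Finally condition (3) is identical: $F(b_i;\tilde{s}_i)\in\mathcal{A}$ for all $i$ forces $F(g(b_1^n);\tilde{s})\in\mathcal{A}$, and therefore $F(g(b_1^n);\tilde{s})\in\vee q\,\mathcal{A}$. This verifies all three clauses of Definition \ref{1} for the pair $(\in,\in\vee q)$.

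There is no genuine obstacle here; the only point requiring care is the bookkeeping that the antecedents of the three implications coincide for the two notions, both using $\alpha=\in$, so that no additional membership or quasi-coincidence facts about $\mathcal{A}$ need to be established and the hypotheses transfer verbatim. In other words, the statement is essentially a logical weakening of the conclusion, and I would expect the written proof to be only a few lines long.
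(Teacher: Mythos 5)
Your proposal is correct and is exactly the argument the paper has in mind: the paper's proof consists only of the remark that it is straightforward, and the content is precisely your observation that the antecedents ($\alpha=\in$) are unchanged while $F(x;\tilde{s})\in\mathcal{A}$ trivially implies $F(x;\tilde{s})\in\vee q\,\mathcal{A}$, so each of the three clauses transfers verbatim.
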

\begin{proof}
The proof is straightforward.
\end{proof}
\begin{theorem} \label{3}
Let $\mathcal{A}$ be an n-ary interval valued $(\in \vee q, \in \vee q)$-fuzzy hyperideal of  a Krasner $(m,n)$-hyperring $\mathcal{R}$. Then $\mathcal{A}$ is an n-ary interval valued $(\in , \in \vee q)$-fuzzy hyperideal of $\mathcal{R}$.
\end{theorem}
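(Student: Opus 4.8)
The plan is to exploit the elementary observation that the membership relation $\in$ is stronger than $\in \vee q$: whenever a fuzzy interval value satisfies $F(x;\tilde{s}) \in \mathcal{A}$, it automatically satisfies $F(x;\tilde{s}) \in \vee q\, \mathcal{A}$, since by definition $\in \vee q$ is the disjunction ``$\in$ or $q$'' and the first disjunct already holds. With this in hand, each of the three defining conditions of an $(\in, \in \vee q)$-fuzzy hyperideal can be deduced from the corresponding condition for an $(\in \vee q, \in \vee q)$-fuzzy hyperideal simply by upgrading the hypotheses from $\in$ to $\in \vee q$ and then invoking the assumed property of $\mathcal{A}$.

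Concretely, to verify condition (1), I would start from arbitrary $a_1^m \in \mathcal{R}$ and $\tilde{s}_1,\dots,\tilde{s}_m \in (0,1]$ with $F(a_i;\tilde{s}_i) \in \mathcal{A}$ for every $i$. By the observation above, each premise also gives $F(a_i;\tilde{s}_i) \in \vee q\, \mathcal{A}$. Since $\mathcal{A}$ is assumed to be an n-ary interval valued $(\in \vee q, \in \vee q)$-fuzzy hyperideal, its condition (1) then forces $F(a; rmin\{\tilde{s}_1,\dots,\tilde{s}_m\}) \in \vee q\, \mathcal{A}$ for every $a \in f(a_1^m)$, which is exactly condition (1) of the $(\in, \in \vee q)$-definition. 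Conditions (2) and (3) are handled in precisely the same manner: one upgrades the single hypothesis $F(b;\tilde{s}) \in \mathcal{A}$ (respectively the $n$ hypotheses $F(b_i;\tilde{s}_i) \in \mathcal{A}$) to its $\in \vee q$ version, and then applies the corresponding condition of the $(\in \vee q, \in \vee q)$-hyperideal to obtain the desired $\in \vee q$ conclusion.

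I expect no genuine obstacle here, since the argument is purely a matter of weakening the hypotheses through the implication that $\in$ entails $\in \vee q$, followed by a direct substitution into the already-assumed property of $\mathcal{A}$. The only point that warrants an explicit line is the justification of that implication, namely that $\in \vee q$ is defined as a disjunction, so that verifying the $\in$ part suffices to certify each premise; everything else reduces to rewriting the three conditions with the stronger premise replaced by the weaker one.
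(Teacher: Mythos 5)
Your proposal is correct and matches the paper's own proof essentially verbatim: the paper likewise notes that $F(a_i;\tilde{s}_i) \in \mathcal{A}$ implies $F(a_i;\tilde{s}_i) \in \vee q\, \mathcal{A}$, applies the assumed $(\in \vee q, \in \vee q)$ condition to obtain the desired $\in \vee q$ conclusion, and dismisses conditions (2) and (3) as similar. No differences worth noting.
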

\begin{proof}
Let $\mathcal{A}$ be an n-ary interval valued $(\in \vee q, \in \vee q)$-fuzzy hyperideal of $\mathcal{R}$. Let $F(a_1;\tilde{s}_1)\in\mathcal{A},\dots, F(a_m;\tilde{s}_m) \in \mathcal{A}$ for all $s_1^m \in (0,1]$ and $a_1^m \in \mathcal{R}$. Therefore $F(a_1;\tilde{s}_1)\in \vee q \mathcal{A},\dots, F(a_m;\tilde{s}_m) \in \vee q \mathcal{A}$. Since  $\mathcal{A}$ be an n-ary interval valued $(\in \vee q, \in \vee q)$-fuzzy hyperideal of $\mathcal{R}$, then $F(a;
rmin\{\tilde{s}_1,\dots,\tilde{s}_m\})\in \vee q\mathcal{A}$, for all $a \in f(a_1^m)$. The proofs of the other cases are similar.
\end{proof}
\begin{theorem} \label{4}
Let $I$ be a hyperideal of  a Krasner $(m,n)$-hyperring $\mathcal{R}$. Then the characteristic function $\chi_I$ of $I$ is an n-ary interval valued $(\in, \in)$-fuzzy hyperideal of $\mathcal{R}$.
\end{theorem}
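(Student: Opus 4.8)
The plan is to reduce each of the three defining conditions of Definition~\ref{1} (taken with $\alpha=\beta=\in$) to the corresponding structural property of the hyperideal $I$, by first establishing that membership of a fuzzy interval value in $\chi_I$ is governed entirely by ordinary set membership in $I$. I would begin by recording that $\tilde{\mu}_{\chi_I}(x)=[1,1]$ when $x\in I$ and $\tilde{\mu}_{\chi_I}(x)=[0,0]$ otherwise, so that $\tilde{\mu}_{\chi_I}$ takes only the two extreme values of the lattice $D[0,1]$. The key preliminary observation, which I would prove at the outset, is that for every $x\in\mathcal{R}$ and every interval value $\tilde{s}\neq[0,0]$ one has $F(x;\tilde{s})\in\chi_I$ if and only if $x\in I$: indeed $F(x;\tilde{s})\in\chi_I$ means $\tilde{\mu}_{\chi_I}(x)\geq\tilde{s}$, and since $\tilde{s}>[0,0]$ this rules out $\tilde{\mu}_{\chi_I}(x)=[0,0]$ and forces $\tilde{\mu}_{\chi_I}(x)=[1,1]$, i.e. $x\in I$, while the converse is immediate because $[1,1]\geq\tilde{s}$ always holds. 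With this equivalence the three conditions become purely set-theoretic statements about $I$.

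For condition (1), assume $F(a_i;\tilde{s}_i)\in\chi_I$ for $i=1,\dots,m$; by the equivalence each $a_i\in I$, and since $(I,f)$ is an $m$-ary subhypergroup we have $f(a_1^m)\subseteq I$. Hence every $a\in f(a_1^m)$ satisfies $\tilde{\mu}_{\chi_I}(a)=[1,1]\geq rmin\{\tilde{s}_1,\dots,\tilde{s}_m\}$, giving $F(a;rmin\{\tilde{s}_1,\dots,\tilde{s}_m\})\in\chi_I$. For condition (3), if $F(b_i;\tilde{s}_i)\in\chi_I$ for all $i$ then all $b_i\in I$, so the ideal axiom $g(b_1^{i-1},I,b_{i+1}^n)\subseteq I$ applied to the first slot yields $g(b_1^n)\in I$; then $\tilde{\mu}_{\chi_I}(g(b_1^n))=[1,1]$ dominates whatever interval value $\tilde{s}$ the definition prescribes, so the required membership follows at once.

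The step requiring the most care is condition (2), the closure of $\chi_I$ under negation, as it is the only point where the canonical structure of $(\mathcal{R},f)$ is genuinely used rather than merely the subhypergroup or ideal axiom. Here I would first verify $0\in I$: picking any $c\in I$ and combining the absorbing property of $0$ with $g(0,I,0^{(n-2)})\subseteq I$ gives $0=g(0,c,0^{(n-2)})\in I$. Then, for $b\in I$, I would apply the subhypergroup solvability axiom to the equation $0\in f(b,x,0^{(m-2)})$, whose right-hand element $0$ and parameters $b,0,\dots,0$ all lie in $I$, to obtain a solution $x\in I$; by the uniqueness of the inverse in the canonical $m$-ary hypergroup $(\mathcal{R},f)$ this solution must coincide with $-b$, so $-b\in I$. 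Therefore $F(b;\tilde{s})\in\chi_I$ implies $b\in I$, hence $-b\in I$, hence $F(-b;\tilde{s})\in\chi_I$. Combining the three verifications shows that $\chi_I$ is an n-ary interval valued $(\in,\in)$-fuzzy hyperideal of $\mathcal{R}$.
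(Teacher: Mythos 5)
Your proof is correct and follows essentially the same route as the paper: reduce $F(x;\tilde{s})\in\chi_I$ to the set-theoretic condition $x\in I$ (since $\tilde{s}>[0,0]$ forces $\tilde{\mu}_{\chi_I}(x)=[1,1]$) and then invoke the hyperideal axioms, which is exactly how the paper handles condition (1). The paper dismisses conditions (2) and (3) as ``similar,'' whereas you supply the details; in particular your verification of condition (2) --- first obtaining $0\in I$ from the absorbing property of $0$ together with $g(0,I,0^{(n-2)})\subseteq I$, then solving $0\in f(b,x,0^{(m-2)})$ inside $I$ and identifying the solution with $-b$ by uniqueness of inverses in the canonical $m$-ary hypergroup --- is a correct argument for a step that is not literally ``similar'' to condition (1) and that the paper leaves unstated.
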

\begin{proof}
Let $I$ be a hyperideal of  a Krasner $(m,n)$-hyperring $\mathcal{R}$. Let $F(a_1;\tilde{s}_1)\in\chi_I,\dots, F(a_m;\tilde{s}_m) \in \chi_I$ for all $s_1^m \in (0,1]$ and $a_1^m \in \mathcal{R}$. Then for $1 \leq i \leq m$, $\tilde{\chi}_I(a_i) \geq \tilde{s}_i> [0,0]$. Then we get $\tilde{\chi}_I(a_i)=[1,1]$ for all $1 \leq i \leq m$. This means $a_i \in \chi_I$ for all $1 \leq i \leq m$. Therefore $a \in \chi_I$ for all $a \in f(a_1^m)$. Thus $\tilde{\chi}_I(a)=[1,1] \geq rmin\{\tilde{s}_1,\dots, \tilde{s}_m\}$ which implies $F(a;rmin\{\tilde{s}_1,\dots, \tilde{s}_m\}) \in \chi_I$. The proofs of the  other conditions are similar. 
\end{proof}

Let $\mathcal{A}$ be an interval valued fuzzy set. The set $F(\mathcal{A};\tilde{s})=\{a \in \mathcal{R} \ \vert \ \tilde{\mu}_{\mathcal{A}}(a) \geq \tilde{s} \}$ is called the interval
valued level subset of $\mathcal{A}$. We say that an interval valued fuzzy set $\mathcal{A}$ of a Krasner $(m,n)$-hyperring $\mathcal{R}$ is proper if $\vert Im \mathcal{A} \vert \geq 2$. If two interval
valued fuzzy sets have the same family of interval valued level subsets, then they are said to be equivalent.
\begin{theorem} \label{7}
Let $\mathcal{R}$ is a Krasner $(m,n)$-hyperring containing some proper hyperideals and let $\mathcal{A}$ be an proper interval valued $(\in, \in)$-fuzzy hyperideal of $\mathcal{R}$ with $\vert Im \mathcal{A} \vert \geq 3$. Then $\mathcal{A}=\mathcal{A}_1 \cup \mathcal{A}_2$ such that  $\mathcal{A}_1$ and $\mathcal{A}_2$ are non-equivalent interval valued $(\in, \in)$-fuzzy hyperideals of $\mathcal{R}$.
\end{theorem}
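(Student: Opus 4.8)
The plan is to reduce the statement to the behaviour of level subsets and then to imitate the classical decomposition of a fuzzy subgroup into two non-equivalent ones. The tool I would use throughout is the interval valued analogue of the result of Davvaz quoted after Definition~2.1: an interval valued fuzzy set $\mathcal{B}$ of $\mathcal{R}$ is an n-ary interval valued $(\in,\in)$-fuzzy hyperideal if and only if every non-empty level subset $F(\mathcal{B};\tilde{s})$ is a hyperideal of $\mathcal{R}$. This is simply the translation of the three conditions of Definition~\ref{1} in the $(\in,\in)$ case into the pointwise inequalities $rmin\{\tilde{\mu}_{\mathcal{B}}(a_1),\dots,\tilde{\mu}_{\mathcal{B}}(a_m)\}\leq \tilde{\mu}_{\mathcal{B}}(a)$ for $a\in f(a_1^m)$, $\tilde{\mu}_{\mathcal{B}}(b)=\tilde{\mu}_{\mathcal{B}}(-b)$, and $\tilde{\mu}_{\mathcal{B}}(g(b_1^n))\geq rmax\{\tilde{\mu}_{\mathcal{B}}(b_1),\dots,\tilde{\mu}_{\mathcal{B}}(b_n)\}$. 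Consequently, once $\mathcal{A}_1$ and $\mathcal{A}_2$ are built, it suffices to check that each of their non-empty level subsets coincides with a level subset of $\mathcal{A}$, which is a hyperideal by hypothesis.

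First I would single out the values at which to split. Since $0$ lies in every hyperideal, it lies in every non-empty $F(\mathcal{A};\tilde{s})$; taking $\tilde{s}=\tilde{\mu}_{\mathcal{A}}(x)$ gives $\tilde{\mu}_{\mathcal{A}}(0)\geq \tilde{\mu}_{\mathcal{A}}(x)$ for all $x$, so $\tilde{t}_0:=\tilde{\mu}_{\mathcal{A}}(0)$ is the greatest element of $Im\,\mathcal{A}$ and is comparable with every other value. Because $|Im\,\mathcal{A}|\geq 3$, I would next produce a strictly descending chain $\tilde{t}_0>\tilde{t}_1>\tilde{t}_2$ of values of $\mathcal{A}$, with no value of $\mathcal{A}$ strictly between consecutive terms, and set
\[
\tilde{\mu}_{\mathcal{A}_1}(x)=\begin{cases}\tilde{t}_2 & \text{if } \tilde{\mu}_{\mathcal{A}}(x)=\tilde{t}_1,\\ \tilde{\mu}_{\mathcal{A}}(x) & \text{otherwise,}\end{cases}
\qquad
\tilde{\mu}_{\mathcal{A}_2}(x)=\begin{cases}\tilde{t}_1 & \text{if } \tilde{\mu}_{\mathcal{A}}(x)\geq \tilde{t}_1,\\ \tilde{\mu}_{\mathcal{A}}(x) & \text{otherwise.}\end{cases}
\]
Treating the cases $\tilde{\mu}_{\mathcal{A}}(x)=\tilde{t}_0$, $\tilde{\mu}_{\mathcal{A}}(x)=\tilde{t}_1$ and $\tilde{\mu}_{\mathcal{A}}(x)\notin\{\tilde{t}_0,\tilde{t}_1\}$ separately yields $rmax\{\tilde{\mu}_{\mathcal{A}_1}(x),\tilde{\mu}_{\mathcal{A}_2}(x)\}=\tilde{\mu}_{\mathcal{A}}(x)$, that is $\mathcal{A}=\mathcal{A}_1\cup\mathcal{A}_2$.

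The routine part is to read off the level subsets. Along the chosen chain one finds that the non-empty level subsets of $\mathcal{A}_2$ are exactly the sets $F(\mathcal{A};\tilde{s})$ with $\tilde{s}\leq \tilde{t}_1$, while those of $\mathcal{A}_1$ are $F(\mathcal{A};\tilde{t}_0)$ together with the sets $F(\mathcal{A};\tilde{s})$ for $\tilde{s}\leq \tilde{t}_2$; in either case every non-empty level subset is a level subset of $\mathcal{A}$, hence a hyperideal, so by the characterisation recalled above both $\mathcal{A}_1$ and $\mathcal{A}_2$ are n-ary interval valued $(\in,\in)$-fuzzy hyperideals. For non-equivalence, observe that $F(\mathcal{A};\tilde{t}_1)$ arises as the level subset $F(\mathcal{A}_2;\tilde{t}_1)$ of $\mathcal{A}_2$, whereas it is not a level subset of $\mathcal{A}_1$, whose family of level subsets passes directly from $F(\mathcal{A};\tilde{t}_0)$ to sets containing $F(\mathcal{A};\tilde{t}_2)$; since $F(\mathcal{A};\tilde{t}_0)\subsetneq F(\mathcal{A};\tilde{t}_1)\subsetneq F(\mathcal{A};\tilde{t}_2)$ are distinct (this is where $|Im\,\mathcal{A}|\geq 3$ and the existence of proper hyperideals enter), $\mathcal{A}_1$ and $\mathcal{A}_2$ have different families of level subsets and so are non-equivalent.

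The step I expect to be the genuine obstacle is the very first one, namely producing the chain $\tilde{t}_0>\tilde{t}_1>\tilde{t}_2$. In contrast to the classical scalar case, $D[0,1]$ is only a lattice, so $Im\,\mathcal{A}$ need not be totally ordered and could a priori consist of the greatest value $\tilde{t}_0$ together with several pairwise incomparable values. Ruling this out, and thereby extracting the chain, is exactly where the hyperring structure must be invoked: if $\tilde{a},\tilde{b}$ are incomparable values then $F(\mathcal{A};\tilde{a})$ and $F(\mathcal{A};\tilde{b})$ are proper hyperideals, and reversibility in the canonical $m$-ary hypergroup $(\mathcal{R},f)$, namely $y\in f(-x,z,0^{(m-2)})$ whenever $z\in f(x,y,0^{(m-2)})$, forbids $\mathcal{R}$ from being a union of two proper hyperideals. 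Combined with the additive inequality $\tilde{\mu}_{\mathcal{A}}(z)\geq rmin\{\tilde{\mu}_{\mathcal{A}}(x),\tilde{\mu}_{\mathcal{A}}(y)\}$, which forces lower values to appear below any two incomparable ones, this comparability analysis is what supplies the required chain; everything after the chain is fixed is bookkeeping with $rmin$ and $rmax$.
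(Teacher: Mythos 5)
The paper itself does not actually write out a proof of this theorem (it only points to Theorem 3.7 of Zhan--Davvaz--Shum), so the question is whether your reconstruction is sound, and it has a genuine gap at exactly the step you yourself flag as the obstacle: producing the chain $\tilde{t}_0>\tilde{t}_1>\tilde{t}_2$. Your argument for ruling out incomparable values does not work. If $\tilde{a}$ and $\tilde{b}$ are incomparable values of $\mathcal{A}$, it does not follow that $\mathcal{R}=F(\mathcal{A};\tilde{a})\cup F(\mathcal{A};\tilde{b})$: an element $x$ whose value is below, or incomparable to, both $\tilde{a}$ and $\tilde{b}$ lies in neither level subset, so the ``no union of two proper hyperideals'' principle never gets off the ground. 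Nor does the inequality $\tilde{\mu}_{\mathcal{A}}(z)\geq rmin\{\tilde{\mu}_{\mathcal{A}}(x),\tilde{\mu}_{\mathcal{A}}(y)\}$ force comparability: with $\tilde{a}=[0.3,0.8]$, $\tilde{b}=[0.5,0.6]$ and $\tilde{\mu}_{\mathcal{A}}(z)=[0.3,0.6]=rmin\{\tilde{a},\tilde{b}\}$, every constraint obtained from reversibility (such as $\tilde{a}\geq rmin\{\tilde{\mu}_{\mathcal{A}}(z),\tilde{b}\}$) is satisfied, so nothing you say excludes an image of the form $\{\tilde{t}_0,\tilde{a},\tilde{b}\}$ with $\tilde{a},\tilde{b}$ incomparable. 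In that situation $Im\,\mathcal{A}$ contains no chain of length three and your construction has nothing to start from; one would need a different decomposition (splitting along the two incomparable values rather than along a chain), which the proposal does not supply. This is precisely the point at which the interval-valued statement differs from the scalar one being imitated, where total ordering of $[0,1]$ makes the issue invisible.

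There is a second, more technical defect even when $Im\,\mathcal{A}$ happens to be totally ordered: a pair of consecutive values $\tilde{t}_1>\tilde{t}_2$ with nothing strictly between them need not exist (take $Im\,\mathcal{A}=\{[1,1]\}\cup\{[t,t]\,:\,t\in[0,0.5]\}$, where $[0.5,0.5]$ has no immediate predecessor), and your $\mathcal{A}_1$ genuinely needs that gap: for $\tilde{t}_2<\tilde{s}\leq\tilde{t}_1$ one computes $F(\mathcal{A}_1;\tilde{s})=F(\mathcal{A};\tilde{s})\setminus\tilde{\mu}_{\mathcal{A}}^{-1}(\tilde{t}_1)$, which fails to be a hyperideal as soon as some value lies strictly between $\tilde{t}_2$ and $\tilde{t}_1$. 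This part is repairable under the chain hypothesis: set $\tilde{\mu}_{\mathcal{A}_1}(x)=\tilde{\mu}_{\mathcal{A}}(x)$ when $\tilde{\mu}_{\mathcal{A}}(x)\geq\tilde{t}_1$ and $[0,0]$ otherwise, and $\tilde{\mu}_{\mathcal{A}_2}(x)=rmin\{\tilde{\mu}_{\mathcal{A}}(x),\tilde{t}_1\}$; then the nonempty level subsets are exactly $\{F(\mathcal{A};\tilde{u}):\tilde{u}\geq\tilde{t}_1\}$ and $\{F(\mathcal{A};\tilde{u}):\tilde{u}\leq\tilde{t}_1\}$ respectively, no gap condition is needed, and non-equivalence follows because $F(\mathcal{A};\tilde{t}_2)$ occurs only in the second family. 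The remaining bookkeeping in your proposal (the union identity, the level-subset verification for $\mathcal{A}_2$, the non-equivalence via $F(\mathcal{A};\tilde{t}_1)$) is correct once a suitable chain is granted; the unresolved problem is that the existence of such a chain is asserted, not proved.
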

\begin{proof}
The proof is similar to the proof of Theorem 3.7 in \cite{zhan}.
\end{proof}
\section{$n$-ary interval valued $(\in, \in \vee q)$-fuzzy hyperideals}
In this section, we first generalize the notion of fuzzy hyperideals to the notion of interval valued fuzzy hyperideals in a  Krasner $(m,n)$-hyperring. Then we study some fundamental aspects of 
the interval valued $(\in, \in \vee q)$-fuzzy hyperideals in a  Krasner $(m,n)$-hyperring.
\begin{definition} \label{8}
Let $\mathcal{A}$ be an interval valued fuzzy set of a Krasner $(m,n)$-hyperring $\mathcal{R}$. $\mathcal{A}$ refers to an n-ary  interval valued fuzzy hyperideal if for all $a_1^m, b_1^n,b \in \mathcal{R}$, the following conditions hold:
\begin{itemize} 
\item[\rm{(i)}]~ $rmin\{\tilde{\mu}_{\mathcal{A}}(a_1),\dots,\tilde{\mu}_{\mathcal{A}}(a_m)\} \leq rinf \{\tilde{\mu}_{\mathcal{A}}(a) \ \vert \ a \in f(a_1^m)\}$,
\item[\rm{(ii)}]~$\tilde{\mu}_{\mathcal{A}}(b) \leq \tilde{\mu}_{\mathcal{A}}(-b)$
\item[\rm{(iii)}]~$rmax\{\tilde{\mu}_{\mathcal{A}}(b_1),\dots,\tilde{\mu}_{\mathcal{A}}(b_n)\} \leq \tilde{\mu}_{\mathcal{A}}(g(b_1^n)).$
\end{itemize}
\end{definition}
The following is a direct consequence and can be proved easily and so the proof is omited.

\begin{theorem} \label{9}
Let $\mathcal{A}$ be an interval valued fuzzy set of a Krasner $(m,n)$-hyperring $\mathcal{R}$. Then $\mathcal{A}$ is an n-ary interval valued fuzzy hyperideal of $\mathcal{R}$ if and only if for each $[0,0] < \tilde{s} \leq [1,1]$, $F(\mathcal{A};\tilde{s}) (\neq \varnothing)$ is a hyperideal of $\mathcal{R}$.
\end{theorem}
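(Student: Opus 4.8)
This is a level-set characterization: an interval valued fuzzy set is an $n$-ary interval valued fuzzy hyperideal exactly when all its nonempty level sets $F(\mathcal{A};\tilde s)$ are hyperideals.

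Let me plan the proof.

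The forward direction: assume the three inf/sup conditions of Definition 4.1, fix $\tilde s$, show $F(\mathcal{A};\tilde s)$ is a hyperideal. A hyperideal needs: (a) $(F(\mathcal{A};\tilde s), f)$ is an $m$-ary subhypergroup, (b) absorption under $g$. For the subhypergroup part I need closure under $f$ and the solvability condition. Closure: take $a_1,\dots,a_m \in F(\mathcal{A};\tilde s)$, so each $\tilde\mu_{\mathcal{A}}(a_i) \geq \tilde s$, hence $rmin_i \tilde\mu_{\mathcal{A}}(a_i) \geq \tilde s$; condition (i) gives $rinf\{\tilde\mu_{\mathcal{A}}(a): a\in f(a_1^m)\}\geq \tilde s$, so every $a\in f(a_1^m)$ lies in the level set. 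Solvability is the subtle piece — see below. Absorption: for $g(a_1^{i-1}, x, a_{i+1}^n)$ with $x\in F(\mathcal{A};\tilde s)$, condition (iii) gives $\tilde\mu_{\mathcal{A}}(g(\dots)) \geq rmax \geq \tilde\mu_{\mathcal{A}}(x) \geq \tilde s$. Condition (ii) handles the inverse/negative.

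Reverse direction: assume every nonempty level set is a hyperideal, recover (i)–(iii). This is the standard contrapositive argument. For (i), suppose it fails at some $a_1^m$: then $\tilde t := rmin_i \tilde\mu_{\mathcal{A}}(a_i)$ exceeds $rinf\{\tilde\mu_{\mathcal{A}}(a):a\in f(a_1^m)\}$, so some $a^\ast\in f(a_1^m)$ has $\tilde\mu_{\mathcal{A}}(a^\ast) < \tilde t$; but each $a_i\in F(\mathcal{A};\tilde t)$, and closure of this hyperideal forces $a^\ast\in F(\mathcal{A};\tilde t)$, i.e.\ $\tilde\mu_{\mathcal{A}}(a^\ast)\geq\tilde t$, a contradiction. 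Conditions (ii) and (iii) follow by the same select-the-threshold trick, using $\tilde s = \tilde\mu_{\mathcal{A}}(b)$ and $\tilde s = rmax_i\tilde\mu_{\mathcal{A}}(b_i)$ respectively.

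The main obstacle, and the point deserving care, is the \emph{solvability} axiom in the definition of an $m$-ary subhypergroup: that $b\in f(b_1^{i-1},x_i,b_{i+1}^m)$ has a solution $x_i$ inside $F(\mathcal{A};\tilde s)$ for all $b_1^{i-1},b_{i+1}^m,b$ in the level set. Conditions (i)--(iii) by themselves only control how $\tilde\mu_{\mathcal{A}}$ behaves \emph{forward} along $f$, $g$, and negation; they do not obviously produce a solution of an equation living in the level set. The resolution is to exploit the canonical $m$-ary hypergroup structure on $\mathcal{R}$: the inverse $x_i = f(b, b_1^{-1},\dots)$ (built from the canonical inverses of Definition 2.4) is forced into the sublevel set by combining closure under $f$ (condition (i)) with closure under negation (condition (ii)), so that the canonical solution already witnessed in $\mathcal{R}$ in fact lies in $F(\mathcal{A};\tilde s)$. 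I would verify this explicitly, since it is exactly where the hyperideal axioms are genuinely used and where a naive argument stalls. The remaining pieces are routine threshold bookkeeping of the kind already displayed in the proof of Theorem 3.4.
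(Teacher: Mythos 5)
Your outline is correct and is exactly the standard level-set argument that the paper itself omits (it states only that the result ``can be proved easily''); in particular you rightly flag the solvability axiom of the $m$-ary subhypergroup as the one non-routine step, handled via the canonical inverses of Definition 2.4 together with conditions (i) and (ii). The only point to tighten is that $\leq$ on $D[0,1]$ is a partial order, so in the converse direction the failure of condition (i) yields some $a^{\ast}\in f(a_1^m)$ with $\tilde{\mu}_{\mathcal{A}}(a^{\ast})\not\geq \tilde{t}$ rather than $\tilde{\mu}_{\mathcal{A}}(a^{\ast})<\tilde{t}$, but the contradiction with $a^{\ast}\in F(\mathcal{A};\tilde{t})$ goes through unchanged.
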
 
\begin{definition} \label{10}
Let $\mathcal{A}$ be an interval valued fuzzy set of a Krasner $(m,n)$-hyperring $\mathcal{R}$. $\mathcal{A}$ is called an n-ary interval valued $(\in, \in \vee q)$-fuzzy hyperideal of $\mathcal{R}$ if for all $s_1^m, t_1^n, t \in (0,1]$ and $a_1^m,b_1^n,b \in \mathcal{R}$
\begin{itemize} 
\item[\rm{(i1)}]~ $F(a_1;\tilde{s}_1) \in \mathcal{A}, \dots, F(a_m;\tilde{s}_m) \in \mathcal{A}$ imply $F(a;rmin\{\tilde{s}_1,\dots,\tilde{s}_m\})\in \vee q \mathcal{A}$, for all $a \in f(a_1^m)$,
\item[\rm{(ii1)}]~ $F(b;\tilde{t}) \in \mathcal{A}$ implies $F(-b;\tilde{t}) \in \vee q \mathcal{A}$
\item[\rm{(iii1)}]~$F(b_1;\tilde{t}_1) \in \mathcal{A}, \dots, F(b_n;\tilde{t}_n) \in \mathcal{A}$ imply $F(g(b_1^n);rmax\{\tilde{t}_1,\dots,\tilde{t}_n\})\in \vee q \mathcal{A}$.
\end{itemize}
\end{definition}
It is clear that every n-ary  interval valued fuzzy hyperideal of a Krasner $(m,n)$-hyperring is an  n-ary interval valued $(\in, \in \vee q)$-fuzzy hyperideal of $\mathcal{R}$. The following example shows that the inverse  is not true, in general.

\begin{example} 
The set $R=\{0,1, 2, 3\}$ with following 2-hyperoperation $"\oplus"$ is a canonical 2-ary hypergroup.

\hspace{1.5cm}
\begin{tabular}{c|c} 
$\oplus$ & $0$ \ \ \ \ \ \ \ $1$ \ \ \ \ \ \ \ $2$ \ \ \ \ \ \ \ $3$
\\ \hline 0 & $0$\ \ \ \ \ \ \ $1$\ \ \ \ \ \ \ \ \ $2$ \ \ \ \ \ \ \ $3$ 
\\ $1$ & $1$ \ \ \ \ \ \ \ $A$ \ \ \ \ \ \ \ $3$ \ \ \ \ \ \ $B$
\\ $2$ & $2$ \ \ \ \ \ \ \ $3$ \ \ \ \ \ \ \ $0$ \ \ \ \ \ \ \ $1$
\\ $3$ & $3$ \ \ \ \ \ \ \ $B$ \ \ \ \ \ \ \ $1$ \ \ \ \ \ \ $A$
\end{tabular}

In which $A=\{0,1\}$ and $B=\{2, 3\}$. Define a 4-ary operation $g$ on $R$ as follows:
\[ 
g(a_1^n)=\bigg{\{}
\begin{array}{lr}
2 & \text{if $a_1,a_2,a_3,a_4 \in B$}\\
0 & \text{otherwise}
\end{array} \]
It follows that $(R,\oplus,g)$ is a Krasner (2,4)-hyperring. Now, we define $\tilde{\mu}_{\mathcal{A}}(0)= \tilde{\mu}_{\mathcal{A}}(1)=[0.8,0.9]$, $\tilde{\mu}_{\mathcal{A}}(2)= [0.7,0.8]$ and $\tilde{\mu}_{\mathcal{A}}(3)=[0.6,0.7]$. Then it is easy to check that $\mathcal{A}$ is an interval valued $(\in,\in \vee,q)$-)-fuzzy hyperideal of $\mathcal{R}$.
\end{example}
In the following theorem, we present an equivalent condition for n-ary interval valued $(\in, \in \vee q)$-fuzzy hyperideals. 
\begin{theorem} \label{11}
Let $\mathcal{A}$ be an interval valued fuzzy set of a Krasner $(m,n)$-hyperring $\mathcal{R}$. Then $\mathcal{A}$ is an n-ary interval valued $(\in, \in \vee q)$-fuzzy hyperideal of $\mathcal{R}$ if and only if for all $a_1^m,b_1^n,b \in \mathcal{R}$, 
\begin{itemize} 
\item[\rm{(i2)}]~ $rmin\{\tilde{\mu}_{\mathcal{A}}(a_1),\dots,\tilde{\mu}_{\mathcal{A}}(a_m),[0.5,0.5]\} \leq rinf \{\tilde{\mu}_{\mathcal{A}}(a) \ \vert \ a \in f(a_1^m)\}$,
\item[\rm{(ii2)}]~$rmin\{\tilde{\mu}_{\mathcal{A}}(b),[0.5.0.5]\} \leq \tilde{\mu}_{\mathcal{A}}(-b)$,
\item[\rm{(iii2)}]~$rmax\{\tilde{\mu}_{\mathcal{A}}(b_1),\dots,\tilde{\mu}_{\mathcal{A}}(b_n),[0.5,0.5]\} \leq \tilde{\mu}_{\mathcal{A}}(g(b_1^n)).$
\end{itemize}
\end{theorem}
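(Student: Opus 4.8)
The statement is the interval-valued version of the classical Bhakat--Das transfer theorem, so the plan is to prove, clause by clause, that each defining implication of Definition \ref{10} is equivalent to the matching inequality (i2), (ii2), (iii2). The organizing principle is that the threshold $[0.5,0.5]$ is exactly the break-even level for quasi-coincidence: a test level $\tilde{s}$ with $\tilde{s}\leq[0.5,0.5]$ can never by itself produce $F(x;\tilde{s})\,q\,\mathcal{A}$, whereas a membership degree $\tilde{\mu}_{\mathcal{A}}(x)\geq[0.5,0.5]$ always yields $\tilde{\mu}_{\mathcal{A}}(x)+\tilde{s}>[1,1]$ once $\tilde{s}$ is pushed past $[0.5,0.5]$. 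Both directions of each clause will be driven by a case split on the position of the relevant level relative to $[0.5,0.5]$. I would prove the additive clauses (i) and (ii) first, since they are the template, and leave the multiplicative clause (iii) for last.

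For the ``if'' direction, assume (i2)--(iii2). To get (i1), suppose $F(a_i;\tilde{s}_i)\in\mathcal{A}$, i.e. $\tilde{\mu}_{\mathcal{A}}(a_i)\geq\tilde{s}_i$ for each $i$, and put $\tilde{s}=rmin\{\tilde{s}_1,\dots,\tilde{s}_m\}$. Monotonicity of $rmin$ gives $rmin\{\tilde{\mu}_{\mathcal{A}}(a_1),\dots,\tilde{\mu}_{\mathcal{A}}(a_m)\}\geq\tilde{s}$, and feeding this into (i2) yields $\tilde{\mu}_{\mathcal{A}}(a)\geq rmin\{\tilde{s},[0.5,0.5]\}$ for every $a\in f(a_1^m)$. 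If $\tilde{s}\leq[0.5,0.5]$ the right-hand side is $\tilde{s}$, so $F(a;\tilde{s})\in\mathcal{A}$; if $\tilde{s}$ sits above $[0.5,0.5]$ then $\tilde{\mu}_{\mathcal{A}}(a)\geq[0.5,0.5]$ and $\tilde{\mu}_{\mathcal{A}}(a)+\tilde{s}>[1,1]$, so $F(a;\tilde{s})\,q\,\mathcal{A}$; either way $F(a;\tilde{s})\in\vee q\,\mathcal{A}$. Clause (ii1) is the same split applied to the single degree $\tilde{\mu}_{\mathcal{A}}(b)$.

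For the ``only if'' direction I would argue contrapositively. If (i2) fails at some $a_1^m$ and some $a\in f(a_1^m)$, then one endpoint inequality fails, say $\min\{\tilde{\mu}^-_{\mathcal{A}}(a_1),\dots,\tilde{\mu}^-_{\mathcal{A}}(a_m),0.5\}>\tilde{\mu}^-_{\mathcal{A}}(a)$. Choosing a scalar $s^-$ strictly between these two numbers and the degenerate level $\tilde{s}=[s^-,s^-]$, one has $\tilde{s}\leq\tilde{\mu}_{\mathcal{A}}(a_i)$ for all $i$ (because $s^-\leq\tilde{\mu}^-_{\mathcal{A}}(a_i)\leq\tilde{\mu}^+_{\mathcal{A}}(a_i)$), so $F(a_i;\tilde{s})\in\mathcal{A}$ and (i1) forces $F(a;\tilde{s})\in\vee q\,\mathcal{A}$. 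But $\tilde{\mu}^-_{\mathcal{A}}(a)<s^-$ blocks membership and $s^-\leq0.5$ gives $\tilde{\mu}^-_{\mathcal{A}}(a)+s^-<2s^-\leq1$, blocking quasi-coincidence, a contradiction. The degenerate level $[s^-,s^-]$ is precisely what lets this endpoint argument run inside the interval framework, and clause (ii) is identical.

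The step I expect to be the real obstacle is not the bookkeeping but the partial order on $D[0,1]$ together with the $rmax$ in clause (iii). The quasi-coincidence relation $\tilde{\mu}_{\mathcal{A}}(x)+\tilde{s}>[1,1]$ constrains both endpoints simultaneously, so $F(x;\tilde{s})\in\vee q\,\mathcal{A}$ does not decompose into two independent endpoint statements; the clean threshold split above is transparent only when the level $\tilde{s}$ is comparable to $[0.5,0.5]$, and the genuinely incomparable case (one endpoint of $\tilde{s}$ below $0.5$, the other above) is where the argument must be checked with care rather than copied from the scalar theorem. The difficulty is compounded in clause (iii): because the aggregation there is $rmax$, not $rmin$, the interaction with the $[0.5,0.5]$ threshold runs the other way, and a faithful level-element analysis of (iii1) shows that each factor $b_i$ must be controlled separately through its own quasi-coincidence test, so reconciling that per-factor behaviour with the joint $rmax$ written in (iii1)--(iii2) is the delicate point I would spend the most effort on.
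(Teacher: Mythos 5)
Your treatment of clauses (i) and (ii) is essentially the paper's own argument: the backward direction is the same direct computation with a case split on whether the level $rmin\{\tilde{s}_1,\dots,\tilde{s}_m\}$ sits below or above $[0.5,0.5]$, and the forward direction is the same interpolation-of-a-level contradiction (the paper picks an interval $\tilde{s}$ strictly between $\tilde{\mu}_{\mathcal{A}}(a)$ and $rmin\{\tilde{\mu}_{\mathcal{A}}(a_1),\dots,\tilde{\mu}_{\mathcal{A}}(a_m)\}$ in one case and tests at $[0.5,0.5]$ in the other; your degenerate level $[s^-,s^-]$ is a slightly more endpoint-aware version of the same move). One small caveat there: the negation of $\leq$ in $D[0,1]$ can fail at either endpoint, and your degenerate interval only runs cleanly when the \emph{lower} endpoint fails; if only the upper endpoint fails you need a genuine two-endpoint level $[s^-,s^+]$ with $s^-\leq\min_i\tilde{\mu}^-_{\mathcal{A}}(a_i)$ and $\tilde{\mu}^+_{\mathcal{A}}(a)<s^+<\min\{\min_i\tilde{\mu}^+_{\mathcal{A}}(a_i),0.5\}$, since a degenerate $[s^+,s^+]$ may not belong to $\mathcal{A}$ at the $a_i$. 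You flag this incomparability issue but do not close it; the paper simply ignores it.

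The genuine gap is clause (iii), which you defer entirely and for which your stated plan rests on a misreading. In (iii1) the hypotheses are plain memberships $F(b_i;\tilde{t}_i)\in\mathcal{A}$ and the conclusion is a \emph{single} $\in\vee q$ statement for $g(b_1^n)$ at level $rmax\{\tilde{t}_1,\dots,\tilde{t}_n\}$; there is no ``per-factor quasi-coincidence test'' to reconcile. Moreover the $rmax$ aggregation makes the backward direction \emph{easier}, not harder, than clause (i): from $\tilde{\mu}_{\mathcal{A}}(b_i)\geq\tilde{t}_i$ and (iii2) one gets
$\tilde{\mu}_{\mathcal{A}}(g(b_1^n))\geq rmax\{\tilde{t}_1,\dots,\tilde{t}_n,[0.5,0.5]\}\geq rmax\{\tilde{t}_1,\dots,\tilde{t}_n\}$,
so membership holds outright and no threshold case split is needed at all. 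The forward direction of (iii) is the same contradiction scheme as (i) with $rmax$ in place of $rmin$ (this is all the paper says about it as well). So the part of the proof you identify as the crux is actually the routine part, and the proposal as written never supplies it; you should replace the speculative final paragraph with the two short computations above.
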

\begin{proof}
We need to show that  the conditions of Definition \ref{10} are equivalent to the  conditions (i2), (ii2) and (iii2), respectively.

$(i1) \Longrightarrow (i2)$: Let $a_1^m \in \mathcal{R}$. Then There exist two cases to be considered:
\begin{itemize} 
\item[\rm{(1)}]~ $rmin\{\tilde{\mu}_{\mathcal{A}}(a_1),\dots,\tilde{\mu}_{\mathcal{A}}(a_m)\} \leq [0.5,0.5]$,
\item[\rm{(2)}]~ $rmin\{\tilde{\mu}_{\mathcal{A}}(a_1),\dots,\tilde{\mu}_{\mathcal{A}}(a_m)\} > [0.5,0.5]$.
\end{itemize}
{\bf Case (1):} Let $\tilde{\mu}_{\mathcal{A}}(a) < rmin\{\tilde{\mu}_{\mathcal{A}}(a_1),\dots,\tilde{\mu}_{\mathcal{A}}(a_m),[0.5,0.5]\}$, for some $a \in f(a_1^m)$. This means $\tilde{\mu}_{\mathcal{A}}(a) < rmin\{\tilde{\mu}_{\mathcal{A}}(a_1),\dots,\tilde{\mu}_{\mathcal{A}}(a_m)\}$. Take $s \in (0,1]$ such that $\tilde{\mu}_{\mathcal{A}}(a) < \tilde{s} <rmin\{\tilde{\mu}_{\mathcal{A}}(a_1),\dots,\tilde{\mu}_{\mathcal{A}}(a_m)\}$. This implies that for all $1 \leq i \leq m$, $F(a_i;\tilde{s}) \in \mathcal{A}$ but $F(a;\tilde{s}) \overline{\in \vee q} \mathcal{A}$, a contradiction.

{\bf Case (2):} Suppose that there exists $a \in f(a_1^m)$ with $\tilde{\mu}_{\mathcal{A}}(a) <[0.5,0.5]$. From  $rmin\{\tilde{\mu}_{\mathcal{A}}(a_1),\dots,\tilde{\mu}_{\mathcal{A}}(a_m)\} > [0.5,0.5]$, it follows that $F(a_i;[0.5,0.5]) \in \mathcal{A}$ for all $1 \leq i \leq m$. In the other hand, $F(a;[0.5.0.5]) \overline{\in \vee q} \mathcal{A}$, a contradiction.

$(ii1) \Longrightarrow (ii2)$: Let $b \in \mathcal{R}$. Then we have the following  two cases:
\begin{itemize} 
\item[\rm{(1)}]~ $\tilde{\mu}_{\mathcal{A}}(b) \leq [0.5,0.5]$,
\item[\rm{(2)}]~ $\tilde{\mu}_{\mathcal{A}}(b) > [0.5,0.5]$.
\end{itemize}
{\bf Case (1):} Let us consider $\tilde{\mu}_{\mathcal{A}}(b)=\tilde{s} <[0.5,0.5]$ and $\tilde{\mu}_{\mathcal{A}}(-b)=\tilde{t} <\tilde{\mu}_{\mathcal{A}}(b)$. We take $r$ such that $\tilde{t}<\tilde{r}<\tilde{s}$ and $\tilde{t}+\tilde{r}<[0.5,0,5]$. Thus we get $F(b;\tilde{r}) \in \mathcal{A}$ but $F(-b;\tilde{r}) \overline{\in \vee q}\mathcal{A}$. This is a contradiction.

{\bf Case (2):} Suppose that $\tilde{\mu}_{\mathcal{A}}(b) \geq [0.5,0.5]$ and $rmin\{\tilde{\mu}_{\mathcal{A}}(b),[0.5.0.5]\} >\tilde{\mu}_{\mathcal{A}}(-b)$. Therefore we obtain $F(b;[0.5,0.5]) \in \mathcal{A}$ but $F(-b;[0.5,0.5]) \overline{\in \vee q} \mathcal{A}$, a contradiction.

$(iii1) \Longrightarrow (iii2)$ By using an argument similar to that in the proof of $(i1) \Longrightarrow (i2)$ one can easily complete the proof.

$(i2) \Longrightarrow (i1)$ Let $F(a_1;\tilde{s}_1) \in \mathcal{A}, \dots, F(a_m;\tilde{s}_m) \in \mathcal{A}$ for $a_1^m \in \mathcal{R}$ and $s_1^m \in (0,1]$. This means that for all $1 \leq i \leq m$ we get $\tilde{\mu}_{\mathcal{A}}(a_i) \geq \tilde{s}_i$. On the other hand, we have $rmin\{\tilde{s}_1,\dots,\tilde{s}_m,[0.5,0.5]\} \leq rmin\{\tilde{\mu}_{\mathcal{A}}(a_1),\dots,\tilde{\mu}_{\mathcal{A}}(a_m),[0.5,0.5]\} \leq \tilde{\mu}_{\mathcal{A}}(a)$ for each $a \in f(a_1^m)$. Now, if $rmin\{\tilde{s}_1,\dots,\tilde{s}_m\}>[0.5,0.5]$, then we have $\tilde{\mu}_{\mathcal{A}}(a) \geq [0.5,0.5]$ and so $\tilde{\mu}_{\mathcal{A}}(a)+rmin\{\tilde{s}_1,\dots,\tilde{s}_m\}>[1,1]$. Otherwise, we get $\tilde{\mu}_{\mathcal{A}}(a) \geq rmin\{\tilde{s}_1,\dots,\tilde{s}_m\}$ and then $F(a;rmin\{\tilde{s}_1,\dots,\tilde{s}_m\}) \in \vee q \mathcal{A}$ for each $a \in f(a_1^m)$.

$(ii2) \Longrightarrow (ii1)$ Suppose that $F(b;\tilde{s}) \in \mathcal{A}$ for some $b \in \mathcal{R}$ and $s \in (0,1]$. This means $\tilde{\mu}_{\mathcal{A}}(b) \geq \tilde{s}$. Thus, $rmin\{\tilde{s},[0.5,0.5])\} \leq rmin\{\tilde{\mu}_{\mathcal{A}}(b),[0.5,0.5])\}  \leq \tilde{\mu}_{\mathcal{A}}(-b)$. Now, if $\tilde{s} \leq [0.5,0.5]$, then $\tilde{\mu}_{\mathcal{A}}(-b) \geq \tilde{s}$ and if $\tilde{s} \geq [0.5,0.5]$, then we get $\tilde{\mu}_{\mathcal{A}}(-b) \geq  [0.5,0.5]$.

$(iii2) \Longrightarrow (iii1)$ This can be proved in a very similar manner to the way in which $(i2) \Longrightarrow (i1)$ was proved. 
\end{proof}

\begin{theorem} \label{5}
Let $I$ be a subset of a Krasner $(m,n)$-hyperring $\mathcal{R}$. $I$ is a hyperideal of $\mathcal{R}$ if and only if $\chi_I$ is an n-ary interval valued $(\in, \in \vee q)$-fuzzy hyperideal of $\mathcal{R}$.
\end{theorem}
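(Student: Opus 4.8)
The plan is to prove both directions of the equivalence, leveraging the machinery already established in this section. The cleanest route is to reduce the statement to Theorem \ref{11}, which gives a pointwise characterization of an $n$-ary interval valued $(\in, \in \vee q)$-fuzzy hyperideal in terms of the three inequalities (i2), (ii2), (iii2) involving $[0.5,0.5]$. Since $\chi_I$ takes only the two values $[1,1]$ (on $I$) and $[0,0]$ (off $I$), verifying those inequalities becomes a finite case check. So the overall strategy is: for the forward direction, assume $I$ is a hyperideal and verify (i2)--(iii2) for $\tilde{\mu}_{\chi_I}$; for the reverse direction, assume $\chi_I$ satisfies (i2)--(iii2) and recover the defining closure properties of a hyperideal.

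First I would handle the forward direction. Suppose $I$ is a hyperideal. For (i2), take $a_1^m \in \mathcal{R}$. If some $a_i \notin I$, then $\tilde{\mu}_{\chi_I}(a_i) = [0,0]$, so the left side $rmin\{\tilde{\mu}_{\chi_I}(a_1),\dots,\tilde{\mu}_{\chi_I}(a_m),[0.5,0.5]\} = [0,0]$ and the inequality holds trivially. If every $a_i \in I$, then since $(I,f)$ is an $m$-ary subhypergroup we have $f(a_1^m) \subseteq I$, so $\tilde{\mu}_{\chi_I}(a) = [1,1]$ for each $a \in f(a_1^m)$, whence the right side equals $[1,1]$ and the inequality again holds. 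Condition (ii2) follows because $I$ being a subhypergroup is closed under inverses (so $b \in I$ forces $-b \in I$), making $\tilde{\mu}_{\chi_I}(-b) = [1,1]$ whenever $\tilde{\mu}_{\chi_I}(b) = [1,1]$. For (iii2), the absorption property $g(a_1^{i-1}, I, a_{i+1}^n) \subseteq I$ guarantees that if any $b_j \in I$ then $g(b_1^n) \in I$; thus whenever $rmax\{\tilde{\mu}_{\chi_I}(b_1),\dots,\tilde{\mu}_{\chi_I}(b_n)\} = [1,1]$ we get $\tilde{\mu}_{\chi_I}(g(b_1^n)) = [1,1]$, so (iii2) holds. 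By Theorem \ref{11}, $\chi_I$ is an $n$-ary interval valued $(\in, \in \vee q)$-fuzzy hyperideal.

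For the reverse direction, I would assume $\chi_I$ is such a hyperideal and, again via Theorem \ref{11}, assume (i2)--(iii2). To recover that $I$ is a hyperideal, take $a_1^m \in I$; then the left side of (i2) is $rmin\{[1,1],\dots,[1,1],[0.5,0.5]\} = [0.5,0.5]$, forcing $\tilde{\mu}_{\chi_I}(a) \geq [0.5,0.5] > [0,0]$ for all $a \in f(a_1^m)$, hence $a \in I$, i.e. $f(a_1^m) \subseteq I$. Similarly (ii2) yields closure under inverses and (iii2) yields the absorption condition $g(a_1^{i-1}, I, a_{i+1}^n) \subseteq I$. The one subtlety, and the step I expect to require the most care, is verifying the full subhypergroup axiom of Definition \ref{1} (the solvability of $b \in f(b_1^{i-1}, x_i, b_{i+1}^n)$ with $x_i \in I$), which does not follow formally from the level-set inequalities alone; here I would invoke the canonical structure of $(R,f)$ together with closure under $f$ and inverses to produce the required solution inside $I$, exactly as in the standard correspondence between hyperideals and their characteristic functions. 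Once these closure properties are in hand, $I$ satisfies the definition of a hyperideal, completing the proof.
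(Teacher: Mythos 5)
Your reverse direction is essentially the paper's: from the membership conditions one reads off $f(a_1^m)\subseteq I$, closure under negation, and the absorption property, exactly as the paper does (it phrases the first two through Definition \ref{10} applied to fuzzy interval values with value $[1,1]$, and the third through the inequality of Theorem \ref{11}). In the forward direction you take a genuinely different route: the paper does not verify the inequalities of Theorem \ref{11} at all, but simply composes Theorem \ref{4} ($\chi_I$ is an $(\in,\in)$-fuzzy hyperideal) with Theorem \ref{2} (every $(\in,\in)$-fuzzy hyperideal is an $(\in,\in\vee q)$-fuzzy hyperideal). That shorter route buys more than brevity: it sidesteps a trap that your route walks into. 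Condition (iii2) of Theorem \ref{11}, taken literally, reads $rmax\{\tilde{\mu}_{\mathcal{A}}(b_1),\dots,\tilde{\mu}_{\mathcal{A}}(b_n),[0.5,0.5]\}\leq\tilde{\mu}_{\mathcal{A}}(g(b_1^n))$, whose left-hand side is always at least $[0.5,0.5]$; for $\mathcal{A}=\chi_I$ with $I$ a proper hyperideal it fails whenever every $b_j\notin I$ and $g(b_1^n)\notin I$ (such $b_1^n$ exist already in the paper's own Example with $I=\{0,1\}$ and $b_1^4=(2,2,2,2)$). Your case analysis for (iii2) treats only the case where some $b_j\in I$, so that step does not close as written. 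The root defect is in the paper --- (iii2) as printed is not equivalent to (iii1) of Definition \ref{10}; the $[0.5,0.5]$ should cap the requirement rather than inflate the hypothesis, e.g. $rmin\{rmax\{\tilde{\mu}_{\mathcal{A}}(b_1),\dots,\tilde{\mu}_{\mathcal{A}}(b_n)\},[0.5,0.5]\}\leq\tilde{\mu}_{\mathcal{A}}(g(b_1^n))$, under which your two-case check is complete --- but if you route through Theorem \ref{11} you must either use the corrected form or fall back on the Theorem \ref{4} plus Theorem \ref{2} argument.

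Your closing observation is well taken and is, if anything, a point in your favour: the paper's own reverse direction also stops after establishing $f(a_1^m)\subseteq I$, closure under inverses, and absorption, and never addresses the solvability requirement $b\in f(b_1^{i-1},x_i,b_{i+1}^n)$ with $x_i\in I$ in the definition of an $m$-ary subhypergroup. Deducing it from closure under $f$ and under inverses via the reversibility axiom of a canonical $m$-ary hypergroup is the right repair. The same remark applies to the nonemptiness of $I$ (needed for $I$ to qualify as a hyperideal, and false for $I=\varnothing$ even though $\chi_{\varnothing}$ satisfies all the fuzzy conditions vacuously), which neither you nor the paper addresses.
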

\begin{proof}
$\Longrightarrow$ Let $I$ is a hyperideal of $\mathcal{R}$. By Theorem \ref{4}, we conclude that $\chi_I$ is an n-ary interval valued $(\in, \in )$-fuzzy hyperideal of $\mathcal{R}$. Thus $\chi_I$ is an n-ary interval valued $(\in, \in \vee q)$-fuzzy hyperideal of $\mathcal{R}$, by Theorem \ref{2}.\\
$\Longleftarrow$ Suppose that $\chi_I$ is an n-ary interval valued $(\in, \in \vee q)$-fuzzy hyperideal of $\mathcal{R}$. Let $a_1^m \in I$. This means $F(a_i; [1,1]) \in \chi_I$ for all $1 \leq i \leq m$. Thus $F(a; [1,1])=F(a; rmin \{\underbrace{[1,1],\dots,[1,1]}_m\}) \in \vee q \chi_I$, for all $a \in f(a_1^m)$. Therefore $\tilde{\mu}_I(a)>[0,0]$ for all $a \in f(a_1^m)$ which implies $f(a_1^m) \subseteq I$. Assume that $a \in I$. Therefore $F(a;[1,1]) \in \chi_I$ which means $F(-a;[1,1]) \in \vee q \chi_I$. Hence $\tilde{\chi}_I(-a)>[0,0]$. Consequently, we get $-a \in I$. Now we suppose that $b_1^n \in \mathcal{R}$ and $b_i \in I$ for some $1 \leq i \leq n$. Then we get $F(b_i;[1,1]) \in \chi_I$. From 
$rmax\{\tilde{\chi}_I(b_1),\dots,\tilde{\chi}_I(b_{i-1}),\tilde{\chi}_I(b_i),\tilde{\chi}_I(b_{i+1}),\dots,\tilde{\chi}_I(b_n),[0.5,0.5]\} \leq \tilde{\chi}_I(g(b_1^n))$, it follows that  $F(g(b_1^n);[1,1]) \in \chi_I$. Thus $g(b_1^{i-1},b_i,b_{i+1}^n) \in I$. Hence $I$ is a hyperideal of $\mathcal{R}$.
\end{proof}
In the following theorem, we examine  n-ary interval valued $(\in, \in \vee q)$-fuzzy hyperideals by level subsets.
\begin{theorem} \label{12}
If $\mathcal{A}$ is an n-ary interval valued $(\in, \in \vee q)$-fuzzy hyperideal of  a Krasner $(m,n)$-hyperring $\mathcal{R}$, then $F(\mathcal{A};\tilde{s})$ is an
empty set or a hyperideal of $\mathcal{R}$ for every $[0,0] < \tilde{s} \leq [0.5,0.5]$.
\end{theorem}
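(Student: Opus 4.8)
The plan is to translate the $(\in,\in\vee q)$ hypothesis into the pointwise inequalities (i2)--(iii2) furnished by Theorem \ref{11}, and then to verify directly that each defining clause of a hyperideal holds on the level set $F(\mathcal{A};\tilde{s})$. Throughout, the decisive elementary fact is that for $[0,0]<\tilde{s}\leq[0.5,0.5]$ one has $rmin\{\tilde{s},[0.5,0.5]\}=\tilde{s}$; this is precisely where the restriction $\tilde{s}\leq[0.5,0.5]$ enters, and without it the closure arguments below would collapse.

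Assume $F(\mathcal{A};\tilde{s})\neq\varnothing$ and fix $\tilde{s}$ with $[0,0]<\tilde{s}\leq[0.5,0.5]$. First I would establish closure under $f$: given $a_1^m\in F(\mathcal{A};\tilde{s})$, so that $\tilde{\mu}_{\mathcal{A}}(a_i)\geq\tilde{s}$ for each $i$, condition (i2) yields $rinf\{\tilde{\mu}_{\mathcal{A}}(a)\mid a\in f(a_1^m)\}\geq rmin\{\tilde{\mu}_{\mathcal{A}}(a_1),\dots,\tilde{\mu}_{\mathcal{A}}(a_m),[0.5,0.5]\}\geq rmin\{\tilde{s},[0.5,0.5]\}=\tilde{s}$, whence $f(a_1^m)\subseteq F(\mathcal{A};\tilde{s})$. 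Closure under negation follows from (ii2) in the same way: if $b\in F(\mathcal{A};\tilde{s})$ then $\tilde{\mu}_{\mathcal{A}}(-b)\geq rmin\{\tilde{\mu}_{\mathcal{A}}(b),[0.5,0.5]\}\geq rmin\{\tilde{s},[0.5,0.5]\}=\tilde{s}$, so $-b\in F(\mathcal{A};\tilde{s})$.

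With closure under $f$ and under inverses in hand, the reproduction clause of the $m$-ary subhypergroup definition---that $b\in f(b_1^{i-1},x_i,b_{i+1}^m)$ is solvable with $x_i\in F(\mathcal{A};\tilde{s})$ whenever $b,b_1^{i-1},b_{i+1}^m\in F(\mathcal{A};\tilde{s})$---follows from the canonical $m$-ary hypergroup structure of $(\mathcal{R},f)$: by axiom (3) of a canonical hypergroup a solution may be taken inside $f(b,b_1^{-1},\dots,b_{i-1}^{-1},b_{i+1}^{-1},\dots,b_m^{-1})$, and since $F(\mathcal{A};\tilde{s})$ is closed under inverses and under $f$, this whole set lies in $F(\mathcal{A};\tilde{s})$. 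Thus $(F(\mathcal{A};\tilde{s}),f)$ is an $m$-ary subhypergroup. For the ideal-absorption clause, condition (iii2) gives for every $b_1^n\in\mathcal{R}$ the bound $\tilde{\mu}_{\mathcal{A}}(g(b_1^n))\geq[0.5,0.5]\geq\tilde{s}$, because the $rmax$ already contains the term $[0.5,0.5]$; in particular $g(a_1^{i-1},x,a_{i+1}^n)\in F(\mathcal{A};\tilde{s})$ for all $a_1^{i-1},a_{i+1}^n\in\mathcal{R}$ and $x\in F(\mathcal{A};\tilde{s})$, so $g(a_1^{i-1},F(\mathcal{A};\tilde{s}),a_{i+1}^n)\subseteq F(\mathcal{A};\tilde{s})$.

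Combining the three clauses shows $F(\mathcal{A};\tilde{s})$ is a hyperideal. The only genuinely delicate point is the reproduction clause, since it is the one place where the canonical structure---rather than the mere inequalities (i2)--(iii2)---must be invoked; every other step is immediate once the identity $rmin\{\tilde{s},[0.5,0.5]\}=\tilde{s}$ is noted, and it is that identity, valid exactly on the range $[0,0]<\tilde{s}\leq[0.5,0.5]$, that explains the hypothesis of the theorem.
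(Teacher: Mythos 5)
Your proof is correct and takes essentially the same route as the paper's: pass to the pointwise inequalities of Theorem \ref{11} and verify the hyperideal conditions on the level set directly, the whole argument resting on the identity $rmin\{\tilde{s},[0.5,0.5]\}=\tilde{s}$ for $[0,0]<\tilde{s}\leq[0.5,0.5]$. You are in fact slightly more complete than the paper, whose proof checks only closure under $f$, under negation, and under $g$-absorption, and silently omits the reproduction (solvability) clause of the $m$-ary subhypergroup definition that you correctly settle via axiom (3) of canonical $m$-ary hypergroups together with closure under inverses.
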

\begin{proof}
Suppose that $\mathcal{A}$ is an n-ary interval valued $(\in, \in \vee q)$-fuzzy hyperideal of $\mathcal{R}$ and $[0,0] < \tilde{s} \leq [0.5,0.5]$. Let $a_1^m \in F(\mathcal{A};\tilde{s})$. This means that $\tilde{\mu}_{\mathcal{A}}(a_i) \geq \tilde{s}$ for all $1 \leq i \leq m$ and so $\tilde{s}=rmin\{\tilde{s},[0.5,0.5]\} \leq rmin\{\tilde{\mu}_{\mathcal{A}}(a_1),\dots, \tilde{\mu}_{\mathcal{A}}(a_m),[0.5,0.5]\} \leq rinf\{\tilde{\mu}_{\mathcal{A}}(a) \ \vert \ a \in f(a_1^m) \}$. Thus $f(a_1^m) \subseteq F(\mathcal{A};\tilde{s})$. Let $b \in F(\mathcal{A};\tilde{s})$. Then $\tilde{\mu}_{\mathcal{A}}(b) \geq \tilde{s}$. Now we get $\tilde{\mu}_{\mathcal{A}}(-b) \geq rmin\{\tilde{\mu}_{\mathcal{A}}(b),[0.5,0.5]\}=rmin\{\tilde{s},[0.5,0.5]\}=\tilde{s}$ which implies $-b \in F(\mathcal{A};\tilde{s})$. Moreover, let $b_1^n \in \mathcal{R}$ such that $b_i \in F(\mathcal{A};\tilde{s})$. Then $\tilde{\mu}_{\mathcal{A}}(g(b_1^n)) \geq rmax\{\tilde{\mu}_{\mathcal{A}}(b_1),\dots, \tilde{\mu}_{\mathcal{A}}(b_{i-1}),\tilde{\mu}_{\mathcal{A}}(b_i),\tilde{\mu}_{\mathcal{A}}(b_{i+1}),\dots,\tilde{\mu}_{\mathcal{A}}(b_n),[0.5,0.5]\} \geq \tilde{s}$. Hence we conclude that $g(b_1^n) \in F(\mathcal{A};\tilde{s})$. Consequently, $F(\mathcal{A};\tilde{s})$ is  a hyperideal of $\mathcal{R}$.
\end{proof}
In the next theorem, the subsets are discussed where $[0.5,0.5] < \tilde{s} \leq [1,1]$.
\begin{theorem} \label{13}
Let $\mathcal{A}$ be an interval valued fuzzy set of a Krasner $(m,n)$-hyperring $\mathcal{R}$. Then  $F(\mathcal{A};
\tilde{s}) (\neq \varnothing)$ is a hyperideal of $\mathcal{R}$ for every $[0.5,0.5] < \tilde{s} \leq [1,1]$ if
and only if for all $a_1^m,b_1^n,b \in \mathcal{R}$, 
\begin{itemize} 
\item[\rm{(1)}]~ $rmin\{\tilde{\mu}_{\mathcal{A}}(a_1),\dots,\tilde{\mu}_{\mathcal{A}}(a_m)\} \leq rinf \{rmax\{\tilde{\mu}_{\mathcal{A}}(a),[0.5,0.5]\} \ \vert \ a \in f(a_1^m)\}$,
\item[\rm{(2)}]~$\tilde{\mu}_{\mathcal{A}}(b) \leq rmax\{\tilde{\mu}_{\mathcal{A}}(-b),[0.5.0.5]\}$,
\item[\rm{(3)}]~$rmax\{\tilde{\mu}_{\mathcal{A}}(b_1),\dots,\tilde{\mu}_{\mathcal{A}}(b_n)\} \leq rmax\{\tilde{\mu}_{\mathcal{A}}(g(b_1^n)),[0.5,0.5]\}.$
\end{itemize}
\end{theorem}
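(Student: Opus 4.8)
The plan is to prove the two implications separately, in each case working directly from the definition of the interval valued level subset $F(\mathcal{A};\tilde{s})=\{a\in\mathcal{R}\mid \tilde{\mu}_{\mathcal{A}}(a)\geq\tilde{s}\}$ and from the three defining properties of a hyperideal of $\mathcal{R}$: closure of $f$ on the subset, closure under the canonical inverse $b\mapsto -b$, and the absorption $g(b_1^{i-1},I,b_{i+1}^n)\subseteq I$. The whole argument runs parallel to the proof of Theorem \ref{12}, the one new ingredient being that the threshold now ranges over the upper half $[0.5,0.5]<\tilde{s}\leq[1,1]$, which is precisely what lets the extra term $[0.5,0.5]$ sitting inside the $rmax$ on the right-hand sides of (1)--(3) be absorbed.

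For the forward direction I would assume that every nonempty $F(\mathcal{A};\tilde{s})$ with $[0.5,0.5]<\tilde{s}\leq[1,1]$ is a hyperideal and establish (1)--(3) by contradiction. For (1), if it failed there would be $a_1^m\in\mathcal{R}$ and $a\in f(a_1^m)$ with $rmax\{\tilde{\mu}_{\mathcal{A}}(a),[0.5,0.5]\}<rmin\{\tilde{\mu}_{\mathcal{A}}(a_1),\dots,\tilde{\mu}_{\mathcal{A}}(a_m)\}$; setting $\tilde{s}=rmin\{\tilde{\mu}_{\mathcal{A}}(a_1),\dots,\tilde{\mu}_{\mathcal{A}}(a_m)\}$ one gets $\tilde{s}>[0.5,0.5]$ (the left-hand side of the failed inequality is already $\geq[0.5,0.5]$), each $a_i\in F(\mathcal{A};\tilde{s})$, hence $a\in f(a_1^m)\subseteq F(\mathcal{A};\tilde{s})$, whereas $\tilde{\mu}_{\mathcal{A}}(a)\leq rmax\{\tilde{\mu}_{\mathcal{A}}(a),[0.5,0.5]\}<\tilde{s}$ forces $a\notin F(\mathcal{A};\tilde{s})$, a contradiction. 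Condition (2) is handled identically with $\tilde{s}=\tilde{\mu}_{\mathcal{A}}(b)$ and the closure of the hyperideal under $-$, and (3) by taking $\tilde{s}=\tilde{\mu}_{\mathcal{A}}(b_i)$ for an index $i$ realizing $rmax\{\tilde{\mu}_{\mathcal{A}}(b_1),\dots,\tilde{\mu}_{\mathcal{A}}(b_n)\}$ and invoking the $g$-absorption of the hyperideal.

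For the converse I would fix $[0.5,0.5]<\tilde{s}\leq[1,1]$ with $F(\mathcal{A};\tilde{s})\neq\varnothing$ and verify the three hyperideal axioms from (1)--(3). The recurring mechanism is that, because $\tilde{s}>[0.5,0.5]$, any inequality of the shape $rmax\{\tilde{\mu}_{\mathcal{A}}(x),[0.5,0.5]\}\geq\tilde{s}$ already yields $\tilde{\mu}_{\mathcal{A}}(x)\geq\tilde{s}$, since $[0.5,0.5]<\tilde{s}$ prevents the $[0.5,0.5]$ term from attaining the $rmax$. Thus if $a_1,\dots,a_m\in F(\mathcal{A};\tilde{s})$ then $rmin\{\tilde{\mu}_{\mathcal{A}}(a_1),\dots,\tilde{\mu}_{\mathcal{A}}(a_m)\}\geq\tilde{s}$, so (1) gives $rmax\{\tilde{\mu}_{\mathcal{A}}(a),[0.5,0.5]\}\geq\tilde{s}$ and hence $\tilde{\mu}_{\mathcal{A}}(a)\geq\tilde{s}$ for every $a\in f(a_1^m)$, i.e.\ $f(a_1^m)\subseteq F(\mathcal{A};\tilde{s})$. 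In the same way (2) gives $-b\in F(\mathcal{A};\tilde{s})$ whenever $b\in F(\mathcal{A};\tilde{s})$, and (3) gives $g(b_1^n)\in F(\mathcal{A};\tilde{s})$ whenever some $b_i\in F(\mathcal{A};\tilde{s})$, so $F(\mathcal{A};\tilde{s})$ is a hyperideal of $\mathcal{R}$.

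The main obstacle I anticipate is the bookkeeping forced by the componentwise nature of $rmin$ and $rmax$ on $D[0,1]$. The two delicate points are: (i) justifying the absorption step ``$rmax\{\tilde{\mu}_{\mathcal{A}}(x),[0.5,0.5]\}\geq\tilde{s}\Rightarrow\tilde{\mu}_{\mathcal{A}}(x)\geq\tilde{s}$'' coordinate by coordinate under only $\tilde{s}>[0.5,0.5]$, which is exactly where the restriction to the upper half $(0.5,1]$ is essential and why the statement differs from Theorem \ref{12}; and (ii) in the forward direction of (3), choosing a single index $i$ whose interval value $\tilde{\mu}_{\mathcal{A}}(b_i)$ both exceeds $[0.5,0.5]$ and realizes the relevant coordinate of $rmax\{\tilde{\mu}_{\mathcal{A}}(b_1),\dots,\tilde{\mu}_{\mathcal{A}}(b_n)\}$, since this $rmax$ is taken coordinatewise and need not be attained by one $b_i$ in both coordinates at once. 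Everything else is the routine translation between the fuzzy-point membership $F(x;\tilde{s})\in\mathcal{A}$ and the level set $F(\mathcal{A};\tilde{s})$ already exploited in Theorems \ref{11} and \ref{12}.
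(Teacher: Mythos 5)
Your proposal is correct and follows essentially the same route as the paper: a proof by contradiction for the forward direction, taking $\tilde{s}$ to be the value witnessing the failure of (1), (2) or (3) (which then automatically satisfies $[0.5,0.5]<\tilde{s}\leq[1,1]$), and a direct verification for the converse using the observation that $\tilde{s}>[0.5,0.5]$ collapses $rmax\{\tilde{\mu}_{\mathcal{A}}(x),[0.5,0.5]\}\geq\tilde{s}$ to $\tilde{\mu}_{\mathcal{A}}(x)\geq\tilde{s}$. The coordinatewise worry you raise about (3) disappears if you argue index by index, showing $\tilde{\mu}_{\mathcal{A}}(b_i)\leq rmax\{\tilde{\mu}_{\mathcal{A}}(g(b_1^n)),[0.5,0.5]\}$ separately for each $i$ via the absorption property, rather than seeking a single index attaining the $rmax$ (the paper itself glosses over this by declaring (3) ``similar to (1)'').
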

\begin{proof}
$\Longrightarrow$ Suppose that $F(\mathcal{A};
\tilde{s}) (\neq \varnothing)$ is a hyperideal of $\mathcal{R}$ for every $[0.5,0.5] < \tilde{s} \leq [1,1]$. (1) Let $rmax\{\tilde{\mu}_{\mathcal{A}}(a),[0.5,0.5]\}<rmin\{\tilde{\mu}_{\mathcal{A}}(a_1),\dots,\tilde{\mu}_{\mathcal{A}}(a_m)\}=\tilde{s}$      for some $a_1^m \in \mathcal{R}$ such that $a \in f(a_1^m)$. This implies that  $[0.5,0.5]<\tilde{s}\leq [1,1]$ and $a_1^m \in F(\mathcal{A};
\tilde{s})$. Since $F(\mathcal{A};
\tilde{s})$ is a hyperideal of $\mathcal{R}$ and $a_1^m \in F(\mathcal{A};
\tilde{s})$, we get $f(a_1^m) \subseteq F(\mathcal{A};
\tilde{s}) $. This means that $\tilde{\mu}_{\mathcal{A}}(a) \geq \tilde{s}$ for each $a \in f(a_1^m)$, a contradiction. Thus $rmin\{\tilde{\mu}_{\mathcal{A}}(a_1),\dots,\tilde{\mu}_{\mathcal{A}}(a_m)\} \leq rinf \{rmax\{\tilde{\mu}_{\mathcal{A}}(a),[0.5,0.5]\} \ \vert \ a \in f(a_1^m)\}$ for all $a_1^m \in \mathcal{R}$. (2) Suppose that $\tilde{s}=\tilde{\mu}_{\mathcal{A}}(b) \geq rmax\{\tilde{\mu}_{\mathcal{A}}(-b),[0.5.0.5]\}$, for some $b \in \mathcal{R}$. This implies that $[0.5,0.5]<\tilde{s}\leq [1,1]$ and $b \in F(\mathcal{A};\tilde{s})$. Since $F(\mathcal{A};
\tilde{s})$ is a hyperideal of $\mathcal{R}$ and $b \in F(\mathcal{A};\tilde{s})$, we conclude that $-b \in F(\mathcal{A};
\tilde{s})$ which means $\tilde{\mu}_{\mathcal{A}}(-b) \geq \tilde{s}$, a contradiction. Hence $\tilde{\mu}_{\mathcal{A}}(b) \leq rmax\{\tilde{\mu}_{\mathcal{A}}(-b),[0.5.0.5]\}$ for $b \in \mathcal{R}$. (3) By a similar argument to that of (1), we can prove $rmax\{\tilde{\mu}_{\mathcal{A}}(b_1),\dots,\tilde{\mu}_{\mathcal{A}}(b_n)\} \leq rmax\{\tilde{\mu}_{\mathcal{A}}(g(b_1^n)),[0.5,0.5]\}$ for $b_1^n \in \mathcal{R}$.\\
$\Longleftarrow$ Suppose that $a_1^m \in F(\mathcal{A};
\tilde{s})$ with $[0.5,0.5] < \tilde{s} \leq [1,1]$. Then $[0.5,0.5]<\tilde{s}\leq rmin\{\tilde{\mu}_{\mathcal{A}}(a_1),\dots,\tilde{\mu}_{\mathcal{A}}(a_m)\} \leq rinf \{rmax\{\tilde{\mu}_{\mathcal{A}}(a),[0.5,0.5]\} \ \vert \ a \in f(a_1^m)\}$ and so $\tilde{s} \leq rinf \{\tilde{\mu}_{\mathcal{A}}(a) \ \vert \ a \in f(a_1^m)\}$. Thus $f(a_1^m) \subseteq F(\mathcal{A};
\tilde{s})$. Also, let $b \in F(\mathcal{A};
\tilde{s})$. Then we get $[0.5,0.5] < \tilde{s} \leq \tilde{\mu}_{\mathcal{A}}(b) \leq rmax\{\tilde{\mu}_{\mathcal{A}}(-b),[0.5.0.5]\}$. Therefore $\tilde{s} \leq \tilde{\mu}_{\mathcal{A}}(-b)$ which means $-b \in F(\mathcal{A};
\tilde{s})$. Now, let $b_1^n \in \mathcal{R}$ and $b_i \in F(\mathcal{A};
\tilde{s})$. Then we obtain  $[0.5,0.5] < \tilde{s} \leq rmax\{\tilde{\mu}_{\mathcal{A}}(b_1),\dots,\tilde{\mu}_{\mathcal{A}}(b_{i-1}),\tilde{\mu}_{\mathcal{A}}(b_i),\tilde{\mu}_{\mathcal{A}}(b_{i+1}),\dots\tilde{\mu}_{\mathcal{A}}(b_n)\} \leq rmax\{\tilde{\mu}_{\mathcal{A}}(g(b_1^n)),[0.5,0.5]\}$. Hence $\tilde{s} \leq \tilde{\mu}_{\mathcal{A}}(g(b_1^n))$ and so $g(b_1^n) \in F(\mathcal{A};
\tilde{s})$. Consequently, $F(\mathcal{A};
\tilde{s})$ is a hyperideal of $\mathcal{R}$ for every $[0.5,0.5] < \tilde{s} \leq [1,1]$.
\end{proof}
\begin{definition} \label{14}
Let $\mathcal{A}$ be an interval valued fuzzy set of a Krasner $(m,n)$-hyperring $\mathcal{R}$ and $s_1,s_2 \in [0,1]$ with $\tilde{s}_1 < \tilde{s}_2$. $\mathcal{A}$ refers to an n-ary interval valued fuzzy hyperideal with thresholds $(\tilde{s}_1 , \tilde{s}_2)$ of $\mathcal{R}$ if for all $a_1^m,b_1^n,b \in \mathcal{R}$, $\mathcal{A}$ satisfies the following conditions:
\begin{itemize} 
\item[\rm{(1)}]~ $rmin\{\tilde{\mu}_{\mathcal{A}}(a_1),\dots,\tilde{\mu}_{\mathcal{A}}(a_m),\tilde{s}_2\} \leq rinf \{rmax\{\tilde{\mu}_{\mathcal{A}}(a),\tilde{s}_1\} \ \vert \ a \in f(a_1^m)\}$,
\item[\rm{(2)}]~$rmin\{\tilde{\mu}_{\mathcal{A}}(b),\tilde{s}_2\} \leq rmax\{\tilde{\mu}_{\mathcal{A}}(-b),\tilde{s}_1\}$,
\item[\rm{(3)}]~$rmax\{\tilde{\mu}_{\mathcal{A}}(b_1),\dots,\tilde{\mu}_{\mathcal{A}}(b_n),\tilde{s}_2\} \leq rmax\{\tilde{\mu}_{\mathcal{A}}(g(b_1^n)),\tilde{s}_1\}.$
\end{itemize}
\end{definition}
Suppose that $\mathcal{A}$  is an n-ary interval valued fuzzy hyperideal with thresholds  of a Krasner $(m,n)$-hyperring $\mathcal{R}$. Let us consider $\tilde{s}_1=[0,0]$ and $\tilde{s}_2=[1,1]$. Then $\mathcal{A}$ is an ordinary interval valued fuzzy hyperideal. Moreover, if we consider $\tilde{s}_1=[0,0]$ and $\tilde{s}_2=[0.5,0.5]$, then  $\mathcal{A}$ is an n-ary interval valued $(\in, \in \vee q)$-fuzzy hyperideal. 
\begin{theorem} \label{15}
Let $\mathcal{A}$ be an interval valued fuzzy set of a Krasner $(m,n)$-hyperring $\mathcal{R}$. Then $\mathcal{A}$ is an n-ary interval valued fuzzy hyperideal with thresholds $(\tilde{s}_1 , \tilde{s}_2)$ of $\mathcal{R}$ if and only if $F(\mathcal{A};\tilde{s})$ is a hyperideal of $\mathcal{R}$ for every $\tilde{s}_1 <\tilde{s} \leq \tilde{s}_2$.
\end{theorem}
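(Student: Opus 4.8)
The statement is the common threshold generalization of Theorems \ref{9}, \ref{12} and \ref{13}; indeed, specializing $(\tilde{s}_1,\tilde{s}_2)$ to $([0.5,0.5],[1,1])$ recovers Theorem \ref{13} essentially verbatim. Accordingly, the plan is to run both implications along the lines of Theorem \ref{13}, systematically replacing the constant $[0.5,0.5]$ by $\tilde{s}_1$ and $[1,1]$ by $\tilde{s}_2$. Throughout I fix an interval $\tilde{s}$ with $\tilde{s}_1 < \tilde{s} \leq \tilde{s}_2$ and a nonempty level set $F(\mathcal{A};\tilde{s})$, and I recall that a subset $I$ is a hyperideal precisely when $f(a_1^m)\subseteq I$ for $a_1^m\in I$, $-b\in I$ for $b\in I$, and $g(b_1^{i-1},I,b_{i+1}^n)\subseteq I$ for all $b_j\in\mathcal{R}$ and all $i$.

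For the implication ``$\mathcal{A}$ a threshold hyperideal $\Rightarrow$ every $F(\mathcal{A};\tilde{s})$ a hyperideal'' I would verify these three closure axioms directly. If $a_1^m \in F(\mathcal{A};\tilde{s})$ then $\tilde{\mu}_{\mathcal{A}}(a_i)\geq\tilde{s}$ for all $i$, so $\tilde{s}=rmin\{\tilde{s},\tilde{s}_2\}\leq rmin\{\tilde{\mu}_{\mathcal{A}}(a_1),\dots,\tilde{\mu}_{\mathcal{A}}(a_m),\tilde{s}_2\}$, and condition (1) of Definition \ref{14} then forces $rmax\{\tilde{\mu}_{\mathcal{A}}(a),\tilde{s}_1\}\geq\tilde{s}$ for every $a\in f(a_1^m)$; since $\tilde{s}>\tilde{s}_1$ this yields $\tilde{\mu}_{\mathcal{A}}(a)\geq\tilde{s}$, i.e. $f(a_1^m)\subseteq F(\mathcal{A};\tilde{s})$. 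The closure under $b\mapsto -b$ and the absorption by $g$ come out identically from conditions (2) and (3), using $\tilde{s}\leq\tilde{s}_2$ to absorb the cap $\tilde{s}_2$ on the left-hand side and $\tilde{s}>\tilde{s}_1$ to discard the floor $\tilde{s}_1$ on the right.

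For the converse I would establish each condition of Definition \ref{14} by contraposition, exactly as in the forward part of Theorem \ref{13}. For instance, were (1) to fail there would be $a_1^m\in\mathcal{R}$ and $a\in f(a_1^m)$ with $rmax\{\tilde{\mu}_{\mathcal{A}}(a),\tilde{s}_1\}<rmin\{\tilde{\mu}_{\mathcal{A}}(a_1),\dots,\tilde{\mu}_{\mathcal{A}}(a_m),\tilde{s}_2\}=:\tilde{s}$; then $\tilde{s}_1<\tilde{s}\leq\tilde{s}_2$ and $a_1^m\in F(\mathcal{A};\tilde{s})$, so closure of the hyperideal $F(\mathcal{A};\tilde{s})$ under $f$ gives $\tilde{\mu}_{\mathcal{A}}(a)\geq\tilde{s}$, contradicting the strict inequality. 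Conditions (2) and (3) are treated by the same template, invoking closure under negation and absorption by $g$ respectively.

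The part that requires genuine care---and the only place the hypotheses on $\tilde{s}_1,\tilde{s}_2$ are really used---is the order arithmetic on $D[0,1]$. First, the recurring inference ``$rmax\{\tilde{\mu}_{\mathcal{A}}(x),\tilde{s}_1\}\geq\tilde{s}$ and $\tilde{s}>\tilde{s}_1$ imply $\tilde{\mu}_{\mathcal{A}}(x)\geq\tilde{s}$'' must be justified componentwise, since $(D[0,1],\leq)$ is only a lattice and not totally ordered; this is precisely the manipulation already made (with $\tilde{s}_1=[0.5,0.5]$) in Theorem \ref{13}. Second, in the contrapositive arguments one must check that the auxiliary interval $\tilde{s}$ produced in each case genuinely lands in the admissible window $\tilde{s}_1<\tilde{s}\leq\tilde{s}_2$, which is what the cap by $\tilde{s}_2$ on the left-hand sides of Definition \ref{14} is there to guarantee. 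Once these two points are settled, the remainder is the same routine bookkeeping with $rmin$, $rmax$ and $rinf$ carried out in Theorem \ref{13}.
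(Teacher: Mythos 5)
Your proposal is correct and follows essentially the same route as the paper's proof: the forward direction verifies the three closure axioms of $F(\mathcal{A};\tilde{s})$ directly from Definition \ref{14}, and the converse establishes each threshold inequality by contraposition, introducing the auxiliary level $\tilde{s}=rmin\{\dots,\tilde{s}_2\}$ (resp.\ $rmax\{\dots,\tilde{s}_2\}$) exactly as the paper does. The lattice-order subtlety you flag --- extracting $\tilde{\mu}_{\mathcal{A}}(x)\geq\tilde{s}$ from $rmax\{\tilde{\mu}_{\mathcal{A}}(x),\tilde{s}_1\}\geq\tilde{s}$ and $\tilde{s}>\tilde{s}_1$ when $D[0,1]$ is only partially ordered (it can fail if $\tilde{s}$ and $\tilde{s}_1$ agree in one component) --- is passed over silently in the paper's own proof as well.
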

\begin{proof}
$\Longrightarrow$ Let $a_1^m \in F(\mathcal{A};\tilde{s})$. Thereby we have $\tilde{\mu}_{\mathcal{A}}(a_i) \geq \tilde{s}$ for all $1 \leq i \leq m$. Then we deduce $\tilde{s}_1<\tilde{s} \leq rmin\{\tilde{s},\tilde{s}_2\} \leq rmin\{\tilde{\mu}_{\mathcal{A}}(a_1),\dots,\tilde{\mu}_{\mathcal{A}}(a_m),\tilde{s}_2\}$ and so
$rinf \{rmax\{\tilde{\mu}_{\mathcal{A}}(a),\tilde{s}_1\} \ \vert \ a \in f(a_1^m)\} \geq \tilde{s}$. This means that $rmax\{\tilde{\mu}_{\mathcal{A}}(a),\tilde{s}_1\} > \tilde{s}$ for all $a \in f(a_1^m)$. Then we get $\tilde{\mu}_{\mathcal{A}}(a)>\tilde{s}$ which implies $a \in F(\mathcal{A};\tilde{s})$. Therefore $f(a_1^m) \subseteq F(\mathcal{A};\tilde{s})$. Also, we assume that $b \in F(\mathcal{A};\tilde{s})$. Then $rmax\{\tilde{\mu}_{\mathcal{A}}(-b),\tilde{s}_1\} \geq rmin\{\tilde{\mu}_{\mathcal{A}}(b),\tilde{s}_2\} \geq \tilde{s}>\tilde{s}_1$. Thereby we have $\tilde{\mu}_{\mathcal{A}}(-b) \geq \tilde{s}$ which means $-b \in F(\mathcal{A};\tilde{s})$. Now, we consider $b_1^n \in \mathcal{R}$ and $b_i \in F(\mathcal{A};\tilde{s})$. Then  $\tilde{\mu}_{\mathcal{A}}(b_i) \geq \tilde{s}$. Hence we obtain $rmax\{\tilde{\mu}_{\mathcal{A}}(b_1),\dots,\tilde{\mu}_{\mathcal{A}}(b_n),\tilde{s}_2\} \geq rmax\{\tilde{s},\tilde{s}_2\} \geq \tilde{s}>\tilde{s}_1$. Then we get $rmax\{\tilde{\mu}_{\mathcal{A}}(g(b_1^n)),\tilde{s}_1\} \geq \tilde{s}$. Therefore $\tilde{\mu}_{\mathcal{A}}(g(b_1^n)) \geq \tilde{s}$ which means $g(b_1^n) \in F(\mathcal{A};\tilde{s})$. Thus $F(\mathcal{A};\tilde{s})$ is a hyperideal of $\mathcal{R}$ for every $\tilde{s}_1 <\tilde{s} \leq \tilde{s}_2$.\\
$\Longleftarrow$ Let $rmax\{\tilde{\mu}_{\mathcal{A}}(a),\tilde{s}_1\}<rmin\{\tilde{\mu}_{\mathcal{A}}(a_1),\dots, \tilde{\mu}_{\mathcal{A}}(a_m),\tilde{s}_2\}=\tilde{s}$ for some $a \in f(a_1^m)$ such that $a_1^m \in \mathcal{R}$. This means $\tilde{s}_1 <\tilde{s}\leq \tilde{s}_2$ and $a_1^m \in F(\mathcal{A};\tilde{s})$. Then $f(a_1^m) \subseteq F(\mathcal{A};\tilde{s})$, as $F(\mathcal{A};\tilde{s})$ is a hyperideal of $\mathcal{R}$. Therefore we get $\tilde{\mu}_{\mathcal{A}}(a) \geq \tilde{s}$ for all $a \in f(a_1^m)$, a contradiction. Then for all $a_1^m \in  \mathcal{R}$, $rmin\{\tilde{\mu}_{\mathcal{A}}(a_1),\dots,\tilde{\mu}_{\mathcal{A}}(a_m),\tilde{s}_2\} \leq rmax\{\tilde{\mu}_{\mathcal{A}}(a),\tilde{s}_1\}$. Also, put $\tilde{s}=rmax\{\tilde{\mu}_{\mathcal{A}}(-b),\tilde{s}_1\}$ for some $b \in \mathcal{R}$. Suppose that  $rmin\{\tilde{\mu}_{\mathcal{A}}(b),\tilde{s}_2\} >\tilde{s}$. Hence $b \in  F(\mathcal{A};\tilde{s})$ which implies $-b \in  F(\mathcal{A};\tilde{s})$ and so $\tilde{\mu}_{\mathcal{A}}(-b) \geq \tilde{s}$, a contradiction. Thus $rmin\{\tilde{\mu}_{\mathcal{A}}(b),\tilde{s}_2\} \leq rmax\{\tilde{\mu}_{\mathcal{A}}(-b),\tilde{s}_1\}$ for $b \in \mathcal{R}$. Now, put $\tilde{s}=rmax\{\tilde{\mu}_{\mathcal{A}}(b_1),\dots,\tilde{\mu}_{\mathcal{A}}(b_n),\tilde{s}_2\}$ for some $b_1^n \in \mathcal{R}$. We assume that $rmax\{\tilde{\mu}_{\mathcal{A}}(g(b_1^n)),\tilde{s}_1\}<\tilde{s}$. Then we get $\tilde{s}_1 <\tilde{s} \leq \tilde{s}_2$ and  $g(b_1^n) \in F(\mathcal{A};\tilde{s})$. Therefore $\tilde{\mu}_{\mathcal{A}}(g(b_1^n)) \geq \tilde{s}$, a contradiction. So $rmax\{\tilde{\mu}_{\mathcal{A}}(b_1),\dots,\tilde{\mu}_{\mathcal{A}}(b_n),\tilde{s}_2\} \leq rmax\{\tilde{\mu}_{\mathcal{A}}(g(b_1^n)),\tilde{s}_1\}$, for every $b_1^n \in \mathcal{R}$. Consequently, $\mathcal{A}$ is an n-ary interval valued fuzzy hyperideal with thresholds $(\tilde{s}_1 , \tilde{s}_2)$ of $\mathcal{R}$.
\end{proof}
\section{Implication-based interval valued fuzzy hyperideals of a Krasner $(m,n)$-hyperring}
Logic is a study of language in arguments and
persuasion. We can  use it to judge the correctness of a
chain of reasoning in a mathematical proof. Fuzzy logic is a generalization of set theoretic variables in terms of the linguistic variable truth. By
using extension principal some operators like $\vee, \wedge, \rightharpoondown, \longrightarrow$ can be applied in fuzzy
logic. In the fuzzy logic, $[P]$ denotes  the truth value of fuzzy proposition $P$. In the following,  a correspondence between set-theoretical notions  and fuzzy logic is shown.\\

$ [a \in \mathcal{A}]=\mathcal{A}(a);$

$[a \notin \mathcal{A}]=1-\mathcal{A}(a);$

$[P \vee Q]=max\{[P],[Q]\};$

$[P \wedge Q]=min\{[P],[Q]\};$

$[\forall a \ P(a)]=inf[P(a)];$

$[P \longrightarrow Q]=min\{1,1-[P]+[Q]\};$

$\models P$ if and only if $[P]=1$ for all valuations.

  $\vspace{0.2cm}$
 
Of course, various implication operators can been deﬁned. In the following, we give a selection of the most important multi-valued
implications, where $\alpha$ means the degree
of membership of the premise, $\beta$ the respective
values for the consequence and $I$ the resulting degree of truth for the implication.
 $\vspace{0.5cm}$
 
 Early Zadeh: $\hspace{0.5cm}I_m(\alpha,\beta)=max\{1-\alpha, min\{\alpha, \beta\}\},$

  $\vspace{0.3cm}$

 Lukasiewicz: $\hspace{0.5cm}I_a(\alpha,\beta)=min\{1,1-\alpha+\beta\},$
 
  $\vspace{0.3cm}$
 
Standard Star (Godel):  $\hspace{0.5cm}
 I_g(\alpha,\beta)=\left\{
 \begin{array}{lr}
 1&\text{if $ \alpha \leq \beta $,}\\
 \beta&\text{otherwise,}
 \end{array} \right.$
 
  $\vspace{0.3cm}$
 
  Contraposition of Godel: $
\hspace{0.5cm} I_{cg}(\alpha,\beta)=\left\{
 \begin{array}{lr}
 1&\text{if $ \alpha \leq \beta $,}\\
 1-\alpha &\text{otherwise,}
 \end{array} \right.$
 
  $\vspace{0.3cm}$
 
Gaines–Rescher: $\hspace{0.5cm}
 I_{gr}(\alpha,\beta)=\left\{
 \begin{array}{lr}
 1&\text{if $ \alpha \leq \beta $,}\\
 0 &\text{otherwise,}
 \end{array} \right.$
 
  $\vspace{0.3cm}$
 
  Kleene–Dienes: $\hspace{0.5cm}I_b(\alpha,\beta)=max\{1-\alpha, \beta\},$
 
  $\vspace{0.3cm}$
 
 Goguen: $
\hspace{0.5cm} I_{gg}(\alpha,\beta)=\left\{
 \begin{array}{lr}
 1&\text{if $ \alpha \leq \beta $,}\\
 \frac{\beta}{\alpha} &\text{if $\alpha > \beta $}
 \end{array} \right.$
 
 $\vspace{0.3cm}$
 
 In the following definition, we considered the definition of  implicative operator in the Lukasiewicz system of continuous-valued
logic.
\begin{definition} \label{16}
Let $\mathcal{A}$ be an  interval valued fuzzy set of a Krasner $(m,n)$-hyperring $\mathcal{R}$. $\mathcal{A}$ refers to an n-ary interval valued fuzzifying hyperideal of $\mathcal{R}$ if it satisfies:
\begin{itemize} 
\item[\rm{(1)}]~ $\models[rmin\{[a_1 \in \mathcal{A}], \dots, [a_m \in \mathcal{A}]\} \longrightarrow [\forall a \in f(a_1^m), \ a \in \mathcal{A}]]$, for all $a_1^m \in \mathcal{R}$, 
\item[\rm{(2)}]~ $\models [[b \in \mathcal{A}] \longrightarrow [-b \in \mathcal{A}]]$, for each $b \in \mathcal{R}$,
\item[\rm{(3)}]~ $\models[rmax\{[b_1 \in \mathcal{A}], \dots, [b_n \in \mathcal{A}]\} \longrightarrow [g(b_1^n) \in \mathcal{A}]]$, for all $b_1^n \in \mathcal{R}.$
\end{itemize}
\end{definition}
It is clear that an interval valued fuzzifying hyperideal is  an
ordinary n-ary interval valued fuzzy hyperideal. We  have the notion of interval valued $\tilde{t}$-tautology. In fact, $\models_{\tilde{t}}P$
if and only if $[P] \geq \tilde{t}$ for all valuations (see \cite{zhan}). Now, we give the following definition.
\begin{definition}
Let $\mathcal{A}$ be an  interval valued fuzzy set of a Krasner $(m,n)$-hyperring $\mathcal{R}$ and $t \in (0,1]$. Then $\mathcal{A}$ is said to be an n-ary $\tilde{t}$-implication-based interval valued 
fuzzy hyperideal of $\mathcal{R}$ if the following conditions hold:
\begin{itemize} 
\item[\rm{(1)}]~ $\models_{\tilde{t}}[rmin\{[a_1 \in \mathcal{A}], \dots, [a_m \in \mathcal{A}]\} \longrightarrow [\forall a \in f(a_1^m), \ a \in \mathcal{A}]]$, for all $a_1^m \in \mathcal{R}$, 
\item[\rm{(2)}]~ $\models_{\tilde{t}} [[b \in \mathcal{A}] \longrightarrow [-b \in \mathcal{A}]]$, for each $b \in \mathcal{R}$,
\item[\rm{(3)}]~ $\models_{\tilde{t}}[rmax\{[b_1 \in \mathcal{A}], \dots, [b_n \in \mathcal{A}]\} \longrightarrow [g(b_1^n) \in \mathcal{A}]]$, for all $b_1^n \in \mathcal{R}.$
\end{itemize}
\end{definition}
\begin{corollary}
Let $\mathcal{A}$ be an  interval valued fuzzy set of a Krasner $(m,n)$-hyperring $\mathcal{R}$ and $I$ be an implicative operator.  Then $\mathcal{A}$ is  an n-ary $\tilde{t}$-implication-based interval valued  fuzzy hyperideal of $\mathcal{R}$ for some $t \in (0,1]$ if and only if 
\begin{itemize} 
\item[\rm{(1)}]~ for any $a_1^m \in \mathcal{R}$, $I(rmin\{\tilde{\mu}_{\mathcal{A}}(a_1),\dots,\tilde{\mu}_{\mathcal{A}}(a_m), rinf \{rmax\{\tilde{\mu}_{\mathcal{A}}(a),\tilde{s}_1\} \ \vert \ a \in f(a_1^m)\} \geq \tilde{t}$,
\item[\rm{(2)}]~for any $b \in \mathcal{R}$, $I(\tilde{\mu}_{\mathcal{A}}(b),\tilde{\mu}_{\mathcal{A}}(-b)\} \geq \tilde{t}$,
\item[\rm{(3)}]~for any $b_1^n \in \mathcal{R}$,$I(rmax\{\tilde{\mu}_{\mathcal{A}}(b_1),\dots,\tilde{\mu}_{\mathcal{A}}(b_n),\tilde{\mu}_{\mathcal{A}}(g(b_1^n)) \} \geq \tilde{t}.$
\end{itemize}
\end{corollary}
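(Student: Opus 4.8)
The plan is to prove the biconditional by a direct unpacking of the definition of the $\tilde{t}$-implication-based interval valued fuzzy hyperideal through the fuzzy-logic semantics recorded just above it. The only facts I need are the semantic clause ``$\models_{\tilde{t}} P$ if and only if $[P] \geq \tilde{t}$ for all valuations'', the membership rule $[a \in \mathcal{A}] = \tilde{\mu}_{\mathcal{A}}(a)$, the quantifier rule $[\forall a\, P(a)] = rinf\, [P(a)]$, and the convention that the arrow in the definition is evaluated through the fixed implicative operator, that is $[P \longrightarrow Q] = I([P],[Q])$. With these in hand, each of the three clauses of the definition and each of the three clauses of the corollary reduce to the same inequality, so both directions of the equivalence follow simultaneously.

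First I would treat clause (1). Its premise $rmin\{[a_1 \in \mathcal{A}], \dots, [a_m \in \mathcal{A}]\}$ evaluates, by the membership rule, to $rmin\{\tilde{\mu}_{\mathcal{A}}(a_1), \dots, \tilde{\mu}_{\mathcal{A}}(a_m)\}$, while its consequence $[\forall a \in f(a_1^m),\ a \in \mathcal{A}]$ evaluates, by the quantifier rule read in the interval-valued lattice, to $rinf\{\tilde{\mu}_{\mathcal{A}}(a) \ \vert \ a \in f(a_1^m)\}$. Feeding these two interval numbers into $I$ gives the truth value $I\big(rmin\{\tilde{\mu}_{\mathcal{A}}(a_1),\dots,\tilde{\mu}_{\mathcal{A}}(a_m)\},\, rinf\{\tilde{\mu}_{\mathcal{A}}(a) \ \vert \ a \in f(a_1^m)\}\big)$ of the whole implication. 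By the semantic clause, asserting $\models_{\tilde{t}}$ of this implication for every $a_1^m \in \mathcal{R}$ is exactly the requirement that this value be $\geq \tilde{t}$, which is statement (1) of the corollary.

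Clauses (2) and (3) are handled by the identical computation. For (2) the premise and consequence evaluate to $\tilde{\mu}_{\mathcal{A}}(b)$ and $\tilde{\mu}_{\mathcal{A}}(-b)$, so the implication has truth value $I(\tilde{\mu}_{\mathcal{A}}(b), \tilde{\mu}_{\mathcal{A}}(-b))$ and $\models_{\tilde{t}}$ becomes statement (2). For (3) the premise $rmax\{[b_1 \in \mathcal{A}], \dots, [b_n \in \mathcal{A}]\}$ evaluates to $rmax\{\tilde{\mu}_{\mathcal{A}}(b_1),\dots,\tilde{\mu}_{\mathcal{A}}(b_n)\}$ and the consequence to $\tilde{\mu}_{\mathcal{A}}(g(b_1^n))$, giving statement (3). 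Since each of the three equivalences is established pointwise over the relevant arguments, the conjunction of the three semantic conditions of the definition coincides with the conjunction of (1)--(3), which settles both implications at once.

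The argument is essentially a definition chase, so there is no genuine obstacle; the only points demanding care are the correct interval-valued reading of the logical connectives, namely that the conjunction in clause (1) becomes $rmin$, the universal quantifier over $f(a_1^m)$ becomes $rinf$, and the disjunction in the premise of clause (3) becomes $rmax$, together with the understanding that the arrow $\longrightarrow$ is to be evaluated through the chosen operator $I$ rather than through the specific Lukasiewicz implication used to motivate the definition. Once these conventions are fixed the equivalence is immediate.
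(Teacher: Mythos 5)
Your proposal is correct and is exactly the argument the paper intends: the corollary is stated without proof as an immediate unpacking of the definition via the semantic clauses $[a\in\mathcal{A}]=\tilde{\mu}_{\mathcal{A}}(a)$, $[\forall a\,P(a)]=rinf[P(a)]$, and $\models_{\tilde{t}}P \Leftrightarrow [P]\geq\tilde{t}$, with the arrow read through the chosen operator $I$. Note only that your derivation yields $rinf\{\tilde{\mu}_{\mathcal{A}}(a)\mid a\in f(a_1^m)\}$ as the second argument of $I$ in clause (1), whereas the printed statement contains a stray $rmax\{\cdot,\tilde{s}_1\}$ with an undefined $\tilde{s}_1$ --- this is evidently a typographical slip in the paper, and your reading is the intended one.
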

\begin{theorem}
(1) Suppose that $I=I_{gr}$. Then $\mathcal{A}$ is an n-ary $[0.5,0.5]$-implication-based interval valued fuzzy hyperideal of $\mathcal{R}$ if and only if $\mathcal{A}$ is an n-ary 
interval valued fuzzy hyperideal with thresholds $(\tilde{s}=[0,0] ,\tilde{r}=[1,1])$ of $\mathcal{R}$.

(2) Suppose that $I=I_{g}$. Then $\mathcal{A}$ is an n-ary $[0.5,0.5]$-implication-based interval valued fuzzy hyperideal of $\mathcal{R}$ if and only if $\mathcal{A}$ is an n-ary 
interval valued fuzzy hyperideal with thresholds $(\tilde{s}=[0,0],\tilde{r}=[0.5,0.5])$ of $\mathcal{R}$.

(3) Suppose that $I=I_{cg}$. Then $\mathcal{A}$ is an n-ary $[0.5,0.5]$-implication-based interval valued fuzzy hyperideal of $\mathcal{R}$ if and only if $\mathcal{A}$ is an n-ary 
interval valued fuzzy hyperideal with thresholds $(\tilde{s}=[0.5,0.5],\tilde{r}=[1,1])$ of $\mathcal{R}$.
\end{theorem}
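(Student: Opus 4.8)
The plan is to collapse each of the three stated equivalences to a pointwise comparison and then match it operator by operator. By the preceding Corollary, the statement ``$\mathcal{A}$ is an n-ary $[0.5,0.5]$-implication-based interval valued fuzzy hyperideal'' means exactly that, for all admissible arguments, $I(\alpha_k,\beta_k)\geq[0.5,0.5]$ for $k=1,2,3$, where
\[ (\alpha_1,\beta_1)=\big(rmin\{\tilde{\mu}_{\mathcal{A}}(a_1),\dots,\tilde{\mu}_{\mathcal{A}}(a_m)\},\ rinf\{\tilde{\mu}_{\mathcal{A}}(a)\mid a\in f(a_1^m)\}\big), \]
\[ (\alpha_2,\beta_2)=\big(\tilde{\mu}_{\mathcal{A}}(b),\ \tilde{\mu}_{\mathcal{A}}(-b)\big),\qquad (\alpha_3,\beta_3)=\big(rmax\{\tilde{\mu}_{\mathcal{A}}(b_1),\dots,\tilde{\mu}_{\mathcal{A}}(b_n)\},\ \tilde{\mu}_{\mathcal{A}}(g(b_1^n))\big). \]
On the other side, after pushing the constant $\tilde{s}_1$ through the infimum via $rinf\{rmax\{\tilde{\mu}_{\mathcal{A}}(a),\tilde{s}_1\}\}=rmax\{\beta_1,\tilde{s}_1\}$, each of conditions (1)--(3) of Definition \ref{14} can be put in the common shape $rmin\{\alpha_k,\tilde{s}_2\}\leq rmax\{\beta_k,\tilde{s}_1\}$ in these same pairs. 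So the whole theorem reduces to three pointwise equivalences, one per operator.

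The three equivalences I would establish are
\[ I_{gr}(\alpha,\beta)\geq[0.5,0.5]\iff \alpha\leq\beta, \]
\[ I_{g}(\alpha,\beta)\geq[0.5,0.5]\iff rmin\{\alpha,[0.5,0.5]\}\leq\beta, \]
\[ I_{cg}(\alpha,\beta)\geq[0.5,0.5]\iff \alpha\leq rmax\{\beta,[0.5,0.5]\}. \]
Reading the right-hand sides against the common shape $rmin\{\alpha,\tilde{s}_2\}\leq rmax\{\beta,\tilde{s}_1\}$ fixes the thresholds: $\alpha\leq\beta$ is the case $(\tilde{s}_1,\tilde{s}_2)=([0,0],[1,1])$, which is the ordinary interval valued fuzzy hyperideal of Definition \ref{8}; $rmin\{\alpha,[0.5,0.5]\}\leq\beta$ is the case $([0,0],[0.5,0.5])$; and $\alpha\leq rmax\{\beta,[0.5,0.5]\}$ is the case $([0.5,0.5],[1,1])$. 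Applying each equivalence to the three pairs $(\alpha_k,\beta_k)$ then yields parts (1), (2) and (3) respectively.

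Each equivalence is to be proved by the same case split on whether $\alpha\leq\beta$. If $\alpha\leq\beta$, all three operators evaluate to $[1,1]\geq[0.5,0.5]$ and the three right-hand inequalities hold trivially. In the complementary branch the operators take their ``otherwise'' values: $I_{gr}(\alpha,\beta)=[0,0]$, which is never $\geq[0.5,0.5]$ and so matches the failure of $\alpha\leq\beta$; $I_{g}(\alpha,\beta)=\beta$, so the left side becomes $\beta\geq[0.5,0.5]$, to be reconciled with $rmin\{\alpha,[0.5,0.5]\}\leq\beta$; and $I_{cg}(\alpha,\beta)=[1,1]-\alpha$, so the left side becomes $\alpha\leq[0.5,0.5]$, to be reconciled with $\alpha\leq rmax\{\beta,[0.5,0.5]\}$.

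The real work, and the step I expect to be the main obstacle, is precisely this reconciliation in the ``otherwise'' branch, because $\leq$ on $D[0,1]$ is only a partial order: the negation of $\alpha\leq\beta$ is strictly weaker than $\alpha>\beta$, so the single-valued arguments of the G\"odel/Lukasiewicz literature cannot simply be transcribed. I would therefore argue componentwise on $\alpha=[\alpha^-,\alpha^+]$ and $\beta=[\beta^-,\beta^+]$, exploiting the standing constraints $\alpha^-\leq\alpha^+$ and $\beta^-\leq\beta^+$ to upgrade a one-sided estimate such as $\beta^-\geq 0.5$ to the full relation $\beta\geq[0.5,0.5]$, and to check that $rmin\{\alpha,[0.5,0.5]\}\leq\beta$ forces $\min\{\alpha^-,0.5\}\leq\beta^-$ and $\min\{\alpha^+,0.5\}\leq\beta^+$ consistently with the branch. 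Once the three displayed equivalences are secured, the reduction to Definition \ref{14} and the identification of the threshold pairs are purely formal.
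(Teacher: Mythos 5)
Your reduction is the right skeleton, and it is already more than the paper provides: the paper's entire proof of this theorem is the sentence ``These can be proved by using definitions,'' so passing through the Corollary to the pointwise conditions $I(\alpha_k,\beta_k)\geq[0.5,0.5]$ and matching them against the common shape $rmin\{\alpha_k,\tilde{s}_2\}\leq rmax\{\beta_k,\tilde{s}_1\}$ is surely what the author intends, and part (1) (the case $I=I_{gr}$) does go through along your lines. The gap is that the two pointwise equivalences you state for $I_g$ and $I_{cg}$ are \emph{false} on $D[0,1]$, for exactly the incomparability reason you flag and then propose to ``reconcile'' componentwise --- no componentwise bookkeeping can repair them. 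Take $\alpha=[0.2,0.8]$ and $\beta=[0.3,0.6]$: then $\alpha\not\leq\beta$ (since $0.8>0.6$), so $I_g(\alpha,\beta)=\beta=[0.3,0.6]\not\geq[0.5,0.5]$, yet $rmin\{\alpha,[0.5,0.5]\}=[0.2,0.5]\leq[0.3,0.6]=\beta$, so the threshold condition for $([0,0],[0.5,0.5])$ holds at this pair while the $I_g$-condition fails. Similarly, $\alpha=[0.3,0.9]$ and $\beta=[0.2,0.95]$ give $\alpha\not\leq\beta$ and $I_{cg}(\alpha,\beta)=[1,1]-\alpha=[0.1,0.7]\not\geq[0.5,0.5]$, while $\alpha\leq rmax\{\beta,[0.5,0.5]\}=[0.5,0.95]$.

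What survives is one direction of each of (2) and (3): if $I_g(\alpha,\beta)\geq[0.5,0.5]$ then either $\alpha\leq\beta$ or $\beta\geq[0.5,0.5]$, and in both cases $rmin\{\alpha,[0.5,0.5]\}\leq\beta$; dually for $I_{cg}$. The converse directions need the dichotomy ``$\alpha\leq\beta$ or $\alpha>\beta$,'' which is available in the scalar setting this theorem is transcribed from but not in the lattice $D[0,1]$, where the ``otherwise'' branch of $I_g$ and $I_{cg}$ also fires on incomparable pairs. To complete the argument you would have to either restrict attention to fuzzy sets whose relevant values are pairwise comparable (a chain in $D[0,1]$), or adopt an interval-valued extension of $I_g$ and $I_{cg}$ that is defined separately on incomparable pairs so that the three displayed equivalences actually hold; as written, they cannot all be ``secured,'' and the theorem itself is doubtful in the generality stated.
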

\begin{proof}
These can be proved by using definitions. 
\end{proof}
\begin{corollary}
(1) Let $I=I_{gr}$. Then $\mathcal{A}$ is an n-ary $[0.5,0.5]$-implication-based interval valued fuzzy hyperideal of $\mathcal{R}$ if and only if $\mathcal{A}$ is an ordinary n-ary interval valued fuzzy hyperideal  of $\mathcal{R}$.

(2) Let $I=I_{g}$. Then $\mathcal{A}$ is an n-ary $[0.5,0.5]$-implication-based interval valued fuzzy hyperideal of $\mathcal{R}$ if and only if $\mathcal{A}$ is an n-ary 
interval valued $(\in, \in \vee q)$-fuzzy hyperideal of $\mathcal{R}$.

(3) Let $I=I_{cg}$. Then $\mathcal{A}$ is an n-ary $[0.5,0.5]$-implication-based interval valued fuzzy hyperideal of $\mathcal{R}$ if and only if $\mathcal{A}$ is an n-ary 
interval valued $(\overline{\in}, \overline{\in \vee q})$-fuzzy hyperideal of $\mathcal{R}$.
\end{corollary}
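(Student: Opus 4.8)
The plan is to obtain all three equivalences by composing the preceding theorem with the two identifications of threshold pairs recorded in the paragraph after Definition \ref{14}. That theorem already reduces each $[0.5,0.5]$-implication-based notion to an interval valued fuzzy hyperideal with a specific pair of thresholds, so the only remaining work is to recognize those pairs as the named classical notions.

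For (1), the preceding theorem with $I=I_{gr}$ gives that $\mathcal{A}$ is a $[0.5,0.5]$-implication-based interval valued fuzzy hyperideal if and only if $\mathcal{A}$ is an interval valued fuzzy hyperideal with thresholds $(\tilde{s}_1=[0,0],\tilde{s}_2=[1,1])$. Substituting these values into Definition \ref{14} makes the operations $rmin\{\,\cdot\,,[1,1]\}$ and $rmax\{\,\cdot\,,[0,0]\}$ act trivially on $D[0,1]$, so conditions (1)--(3) of Definition \ref{14} collapse exactly to conditions (i)--(iii) of the ordinary interval valued fuzzy hyperideal in Definition \ref{8}; this is precisely the first remark after Definition \ref{14}. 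Chaining the two equivalences yields (1). For (2), the preceding theorem with $I=I_{g}$ gives equivalence with thresholds $(\tilde{s}_1=[0,0],\tilde{s}_2=[0.5,0.5])$, and the second remark after Definition \ref{14} states that this pair reproduces conditions (i2)--(iii2) of Theorem \ref{11}, i.e. the $(\in,\in\vee q)$-fuzzy hyperideal condition; composing the two statements gives (2).

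The only genuinely new step is (3). Here the preceding theorem with $I=I_{cg}$ gives equivalence with thresholds $(\tilde{s}_1=[0.5,0.5],\tilde{s}_2=[1,1])$, and I must identify this pair with the $(\overline{\in},\overline{\in\vee q})$-fuzzy hyperideal condition, which the excerpt never writes out explicitly. I would first record the defining axioms of an $(\overline{\in},\overline{\in\vee q})$-fuzzy hyperideal, obtained from Definition \ref{10} by replacing $\in$ in the premises with $\overline{\in}$ and $\in\vee q$ in the conclusions with $\overline{\in\vee q}$, and then, exactly as in the proof of Theorem \ref{11}, convert these membership axioms into pointwise inequalities for $\tilde{\mu}_{\mathcal{A}}$. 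Setting $\tilde{s}_1=[0.5,0.5]$ and $\tilde{s}_2=[1,1]$ in Definition \ref{14} produces inequalities such as $rmin\{\tilde{\mu}_{\mathcal{A}}(a_1),\dots,\tilde{\mu}_{\mathcal{A}}(a_m)\}\leq rinf\{rmax\{\tilde{\mu}_{\mathcal{A}}(a),[0.5,0.5]\}\mid a\in f(a_1^m)\}$, and the task is to check that these match the converted axioms term by term. The main obstacle is therefore not any hard estimate but supplying the missing explicit characterization of the $(\overline{\in},\overline{\in\vee q})$ notion and matching it against the thresholds inequalities; once this dictionary is in place, all three equivalences follow at once.
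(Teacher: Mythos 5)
Your route is exactly the one the paper intends: the corollary is meant to follow by composing the preceding theorem (which converts each choice of $I$ into a thresholds pair) with the identifications recorded in the paragraph after Definition \ref{14}, and the paper supplies no further argument, so parts (1) and (2) of your proposal are complete and match the source. For part (3) you correctly isolate the one missing ingredient --- an explicit characterization of the $(\overline{\in},\overline{\in\vee q})$ notion, which the paper never writes down --- but you only describe the verification rather than carry it out; to close it you should note that the thresholds-$([0.5,0.5],[1,1])$ inequalities are precisely the conditions of Theorem \ref{13}, i.e.\ they say that $F(\mathcal{A};\tilde{s})$ is a hyperideal for all $[0.5,0.5]<\tilde{s}\leq[1,1]$, and then prove (by the contrapositive-style argument of Theorem \ref{11}) that this level-set property is equivalent to the $(\overline{\in},\overline{\in\vee q})$ axioms. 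One caution for that step: since $D[0,1]$ is only a lattice and not a chain, $F(x;\tilde{s})\,\overline{\in}\,\mathcal{A}$ (the negation of $\tilde{\mu}_{\mathcal{A}}(x)\geq\tilde{s}$) is strictly weaker than $\tilde{\mu}_{\mathcal{A}}(x)<\tilde{s}$, so the term-by-term matching needs the same care (choice of comparable intermediate intervals) that the paper itself glosses over in the proof of Theorem \ref{11}.
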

\section{Conclusion}
Since hyperstructure theory was introduced by Marty in 1934, the idea has been investigated by many researches in the following decades. 
 In this paper, we introduced and characterized  the 
interval valued $(\alpha,\beta)$-fuzzy hyperideals of a Krasner $(m,n)$-hyperring, in which special attention was concentrated on the interval valued $(\in ,\in \vee q)$-fuzzy hyperideals. The consequences given in this paper can hopefully provide more realization into and
a full cognition of algebraic hyperstructures  and fuzzy set theory.


\end{document}